\newtheorem{definition}{Definition}[section]
\newtheorem{lemma}[definition]{Lemma}
\newtheorem{theorem}[definition]{Theorem}
\newtheorem{proposition}[definition]{Proposition}
\newtheorem{corollary}[definition]{Corollary}
\newtheorem{remark}[definition]{Remark}
\newtheorem{example}[definition]{Example}
\def \g{\mathfrak{g}}
\def \R{\mathbb{R}}
\def \N{\mathcal{N}}
\title[Mechanical presymplectic structures and MW reduction of time-dependent Hamiltonian systems]{Mechanical presymplectic structures and Marsden-Weinstein reduction of time-dependent Hamiltonian systems}
\author{I. Gutierrez-Sagredo$^{1,2}$, D. Iglesias Ponte$^{2,3}$, J. C. Marrero$^{2,3}$, E. Padr\'on$^{2,3}$}
\date{}
\thanks{AMS Mathematics Subject Classification (2020): 53D20, 70G45, 70G65, 70H05, 70H33.}
\thanks{Keywords:  cosymplectic structures, time-dependent Hamiltonian systems, Marsden-Weinstein reduction, Reeb dynamics, presymplectic structures.}
\begin{document}

\maketitle

\vspace{-20pt}
\begin{center}
{\small\it  $\;^1$ Departamento de Matem\'aticas y Computaci\'on, Universidad de Burgos, 09001 Burgos, Spain}

{\small\it $\;^2$Departamento de Matem\'aticas, Estad\'{\i}stica e Investigaci\'on Operativa}\\{\small\it University of La Laguna, La Laguna, Spain}
\\[5pt]

{\small\it $\;^3$ULL-CSIC Geometr\'{\i}a Diferencial y Mec\'anica Geom\'etrica and Instituto de Matem\'aticas y Aplicaciones IMAULL-University of La Laguna, La Laguna, Spain}
\\[5pt]
e-mail: {\small \href{mailto:igsagredo@ubu.es}{igsagredo@ubu.es}, \href{mailto:idiglesia@ull.edu.es}{diglesia@ull.edu.es}, \href{mailto:jcmarrer@ull.edu.es}{jcmarrer@ull.edu.es}, \href{mailto:mepadron@ull.edu.es}{mepadron@ull.edu.es} }

\end{center}
\begin{abstract} 
In 1986, Albert proposed a Marsden-Weinstein reduction process for cosymplectic structures. In this paper, we present the limitations of this theory in the application of the reduction of symmetric time-dependent Hamiltonian systems. As a consequence, we conclude that cosymplectic geometry is not appropriate for this reduction. Motived for this fact, we replace cosymplectic structures by more general structures: mechanical presymplectic structures. Then, we develop Marsden-Weinstein reduction for this kind of structures and we apply this theory to interesting examples of time-dependent Hamiltonian systems for which Albert's reduction method doesn't work. 
\end{abstract}
\maketitle

\tableofcontents
%%%%%%%%%%%%%%%%%%%%%%%%%%%%%%%%%%%%%%%%%%%%%%%%%%%

\section{Introduction}%\label{sec:intro}

\subsection{Marsden-Weinstein reduction of a Hamiltonian system on  a symplectic manifold}
Symplectic manifolds play a key role in the Hamiltonian formulation of Classical Mechanics. As it 
is well known, given a configuration space $Q$ of a classical mechanical system, its cotangent bundle 
$T^*Q$, equipped with a canonical symplectic structure, is the fundamental element to describe 
Hamiltonian Mechanics. More precisely, for a Hamiltonian function $H:T^*Q\to {\mathbb R}$, Hamilton equations are just the 
integral curves of the corresponding Hamiltonian vector field (see, for instance, \cite{AM,MP, LM}).

In this context, a method that simplifies the integration of the dynamical equations of a symmetric Hamiltonian system and that  sometimes provides new examples of symplectic manifolds, is the reduction process. A particular case is the so-called Marsden-Weinstein reduction \cite{MW} (see also \cite{AM}). A Hamiltonian action of a Lie group $G$ on a symplectic manifold $(M,\Omega )$ is an action by symplectomorphisms such that there is a map $J\colon M\to \mathfrak{g}^*$, $\mathfrak{g}^*$ being the dual of the Lie algebra $\mathfrak{g}$ of $G$, 
satisfying
 \[
i_{\xi_M}\Omega=dJ_\xi,\qquad  \mbox{ for all }\xi \in {\mathfrak g},
 \]
where $\xi_M\in \mathfrak{X}(M)$ is the fundamental vector field corresponding to the Lie algebra element $\xi$ and $J_\xi\colon M\to \R$ is the real $C^\infty$-function on $M$ given by
\[
J_\xi (x)=\langle J(x),\xi \rangle, \mbox{ for }x \in M.
\]
Moreover,  $J$ is $G$-equivariant if it is equivariant with respect to the symplectic action $\phi$ and the coadjoint representation $Ad^*:G\times {\mathfrak g}^*\to {\mathfrak g}^*$. Under this hypothesis (a Hamiltonian action with $G$-equivariant momentum map), given a regular value $\mu$ of $J$, if the isotropy subgroup $G_\mu$ for the coadjoint representation acts freely and properly on $J^{-1}(\mu )$ then $M_\mu=J^{-1}(\mu )/G_\mu$ is a smooth manifold  and the Marsden-Weinstein reduction process shows that there is a naturally induced ``reduced" symplectic structure $\Omega_\mu$  on $M_\mu$. If, in addition, we have a $G$-invariant Hamiltonian function $H\colon M\to \R$ then, for each $\xi \in \mathfrak{g}$, the smooth function $J_\xi$ is a first integral of the Hamiltonian vector field $X_H$. Moreover, the Marsden-Weinstein reduction process allows to describe the reduced dynamics as the Hamiltonian dynamics with respect to a reduced Hamiltonian function 
$H_\mu\colon M_\mu\to \R$.

\subsection{Cosymplectic and symplectic reduction for time-dependent Hamiltonian systems} 
Cosymplectic structures provide an odd-dimensional counterpart to symplectic structures. A cosymplectic manifold $(M,\omega, \eta )$ is a $(2n+1)$-dimensional manifold $M$ equipped with a closed $2$-form $\omega\in\Omega^2(M)$ and a closed $1$-form  $\eta\in \Omega^1(M)$ such that $\eta\wedge \omega^n$ is a volume form on $M$. Cosymplectic structures were first introduced by Libermann \cite{Lib} and they appear, for instance, as the induced structure on the critical set of a $b$-symplectic manifold \cite{GMP}. In addition, they have been used in several geometric formulations of time-dependent Hamiltonian systems (see, for instance, \cite{ChLeMa}). 

Given a  cosymplectic structure on a manifold $M$, there is an intrinsic dynamics derived from the so-called {\it Reeb vector field }${\mathcal R}$, which is characterized by the conditions 
\[
i_{\mathcal R}\omega=0 \mbox{ and }i_{\mathcal R}\eta=1.
\]
Moreover, if  $H:M\to {\mathbb R}$ is a Hamiltonian function one can 
define the Hamiltonian vector field of $H$ characterized by 
\[
i_{X_H^{(\omega,\eta)}}\omega=dH-{\mathcal R}(H)\eta \mbox{ and } i_{X_H^{(\omega,\eta)}}\eta=0,
\]
(see for instance, \cite{CaLeLa}). 
It is remarkable that, given a Hamiltonian $H$, one can
modify $(\omega, \eta)$ and obtain a new cosymplectic structure $(\omega_H=\omega+  dH\wedge \eta,\eta)$ such that the evolution vector field $E^{(\omega,\eta)}_H=X_H^{(\omega ,\eta)}+\mathcal{R}$ of $H$ with respect to  $(\omega,\eta)$
is just the Reeb vector field ${\mathcal R}_H$  of $(\omega_H,\eta)$ (see Proposition \ref{cc}).

For time-dependent Hamiltonian systems,  and once a reference frame has been fixed, the phase space is just the product ${\mathbb R}\times T^*Q.$ The canonical coordinate $t$ in ${\mathbb R}$ represents the time and $T^*Q$ is the phase space of positions and momenta. ${\mathbb R}\times T^*Q$ admits a canonical cosymplectic structure with Reeb vector field ${\mathcal R}=\displaystyle \frac{\partial}{\partial t}$. In addition, for a time-dependent Hamiltonian function $H:{\mathbb R}\times T^*Q\to {\mathbb R}$, the integral curves of the evolution vector field $E^{(\omega,\eta)}_H$ are just the solutions of the Hamiltonian equations for $H.$

Following Marsden-Weinstein reduction, in \cite{A} C. Albert  proposed a reduction theorem for cosymplectic structures, which has been extended in the singular setting in \cite{LS}. For this purpose, Albert introduces the notion of a Hamiltonian cosymplectic action as follows. An action of a Lie group $G$ on a cosymplectic manifold $(M,\omega,\eta)$ preserving $\omega$ and $\eta$ is Hamiltonian if there is a $G$-equivariant map $J: M\to \mathfrak{g}^*$, called the momentum map, such that 
\[
 i_{\xi_M}\omega=dJ_\xi,\qquad  \mbox{ for all }\xi \in {\mathfrak g}.
\]
In this setting, there is also a Noether theorem, since 
${\mathcal R}(J_\xi)=0$, that is, the functions $J_\xi$ are first integrals of the Reeb vector field (this method has been recently used in \cite{LuMaZa}). However, Albert's reduction Theorem has several limitations. For instance, in order to induce a cosymplectic structure $(\omega _\mu ,\eta _\mu )$ on the reduced manifold $M_\mu = J^{-1}(\mu )/G_\mu,$ one needs to impose the condition
\begin{equation}\label{intro:cA}
\eta(\xi_M)=0,\qquad \mbox{ for all }\xi\in {\mathfrak g}.
\end{equation}
However, there exist Hamiltonian cosymplectic actions for interesting invariant time-dependent Hamiltonian systems which do not satisfy condition  (\ref{intro:cA}).

Even more impactful is the limitation posed by Noether's theorem. In fact, for a time-dependent Hamiltonian system on  $M={\mathbb R}\times T^*Q$ with Reeb vector field ${\mathcal R}=\displaystyle\frac{\partial}{\partial t},$ since ${\mathcal R}(J_\xi)=0, $ then conserved quantities do not depend explicitly on time. On its own, this fact is already unsatisfactory for a theory whose main interest is to deal with time-dependent Hamiltonian systems. Moreover, it goes against the spirit of any relativity theory (for instance, Galilean relativity). It would be desirable to have a theory that, assuming it is applicable to some physical system, it continues to apply to the same system but described by a different inertial observer. The simplest example would be the position of the center of mass of a $N$ particle system subject to no external force: when described by an observer moving with the system, it would be constant. However, for any other inertial observer it would depend explicitly on time (linearly on this example). This line of thought provides a virtually infinite family of examples for our theory by considering time-independent Hamiltonian systems described by different inertial observers (one such example is considered in Example \ref{example1}). It could be argued that Albert's reduction Theorem could be applied to this systems if the ``right'' observer is chosen, but as discussed before this would be against the relativity principle. Moreover, there are examples which are intrinsically different from the previous ones in which Albert's reduction Theorem cannot be applied in any case, and therefore,  a new reduction process should be proposed (see Section \ref{sec:planewave} for one such example). 

In another direction, time-dependent Hamiltonian systems may be discussed in the so-called extended formalism. In this approach, the extended phase space is a symplectic manifold: the total space of a principal $\R$-bundle on the phase space of momenta associated with the time-dependent Hamiltonian system. In addition, each symmetric time-dependent Hamiltonian system induces a symmetric Hamiltonian system on the symplectic extended phase space. So, one may apply standard Marsden-Weinstein symplectic reduction to this last system and, as a consequence, you can obtain the corresponding results for the time-dependent Hamiltonian system. These ideas were developed in \cite{LaMaPa}.

\subsection{Aim and contributions of the paper}
In this paper, we try to overcome the limitations of Albert's reduction method in the application of the reduction of symmetric time-dependent Hamiltonian systems. First of all, we 
propose a modification of the action and the moment map in order to satisfy (\ref{intro:cA}). Since this approach is not completely satisfactory (from a dynamic perspective)  because we need to know the flow of the Reeb vector field, we weaken the notion of a cosymplectic structure. More precisely, we introduce the notion of a mechanical presymplectic structure on an odd-dimensional manifold $M$ as a closed 2-form  $\omega$ of corank $1$   and a vector field ${\mathcal R}$ which  generates the characteristic foliation of $\omega$, 
\[ 
\ker\omega=\langle {\mathcal R}\rangle.
\]
Any cosymplectic structure induces a mechanical presymplectic structure but there are examples of mechanical presymplectic structures which do not come from any cosymplectic ones (see Section \ref{section4}). If we have a Lie group acting on a manifold endowed with a mechanical presymplectic structure such that 
\begin{itemize}
\item The presymplectic $2$-form $\omega$ is $G$-invariant,
\item ${\mathcal R}$ is not tangent to the $G$-orbits and 
\item There exists a smooth map $J:M\to {\mathfrak g}^*$ such that 
$$i_{\xi_M}\omega=dJ_\xi, \forall \xi\in {\mathfrak g}$$
\end{itemize}
then, under adequate topological assumptions,  there is an induced mechanical presymplectic structure on the reduced manifold $J^{-1}(\mu )/G_\mu$. This result is then used to obtain a correct reduction of the evolution dynamics of a time-dependent Hamiltonian system on a cosymplectic manifold. 

We remark that a general result on the reduction of presymplectic Hamiltonian systems with symmetry has already been considered in \cite{EMR2}. This result is compared with our approach for presymplectic structures of corank $1$.

\subsection{Organization of the paper}
The paper is organized as follows. In Section \ref{section2}, we introduce the notion of a cosymplectic structure, we recall its basic properties and  we describe two geometric formulations of time-dependent Hamiltonian dynamics using cosymplectic structures. In Section \ref{sec:3},  we present Albert's reduction procedure for cosymplectic structures and describe the problems that arise. Moreover, we present a modification 
of the action in order to solve these issues and the drawbacks.
In Section \ref{section4}, we introduce the notion of a mechanical presymplectic structure and a natural reduction procedure for this kind of structures, comparing our results with those in \cite{EMR2} (see Remark \ref{rmk:EMR2}). Combining the results of the two previous sections, we obtain a method to reduce the evolution dynamics of a cosymplectic Hamiltonian system (see Section \ref{sec:5}). In Section \ref{section:examples}, we present several examples  in order to illustrate our theory. The paper closes with an appendix which includes the proof of an algebraic result (Lemma \ref{Lema:4.5}). 

\section{Cosymplectic Hamiltonian systems and time-dependent Hamiltonian dynamics}\label{section2}

A {\it cosymplectic structure} on a $(2n+1)$-dimensional manifold $M$ is a closed $2$-form $\omega\in\Omega^2(M)$ and a closed $1$-form  $ \eta\in \Omega^1(M)$ such that $\eta\wedge \omega^n$ is a volume form on $M.$
In such a case,  we say that $(M,\omega,\eta)$ is a {\it cosymplectic manifold.}

Any cosymplectic manifold has an intrinsic dynamics derived from {\it the Reeb vector field }${\mathcal R}$, a vector field on $M$ characterized by the following conditions
\begin{equation}\label{reeb}
i_{\mathcal R}\omega=0 \mbox{ and }i_{\mathcal R}\eta=1.
\end{equation}

Note that these conditions imply that the cosymplectic structure $(\omega,\eta)$ is invariant with respect to the Reeb vector field, {\it i.e.}
\begin{equation}\label{ROE}
{\mathcal L}_{\mathcal R}\omega=0 \mbox{ and } {\mathcal L}_{\mathcal R}\eta=0.
\end{equation}

Moreover, 
$$\ker\omega=\langle {\mathcal R}\rangle$$ and we have  the following decomposition of the tangent bundle 
\[
TM=\langle \eta\rangle ^0 \oplus \langle {\mathcal R}\rangle 
\]
given by 
\[ 
X=(X-\eta(X){\mathcal R})+ \eta(X){\mathcal R},
\]
for all $X\in TM$. Here $\langle \eta\rangle ^0$ is the annihilator of $\eta$, i.e.
\[
\langle \eta\rangle ^0=\{X\in TM/\eta(X)=0\}.
\]

A cosymplectic manifold is a Poisson manifold,  where the Poisson bivector is given by 
\[
\Lambda(\alpha_1,\alpha_2)=\omega(\flat^{-1}(\alpha_1),\flat^{-1}(\alpha_2)),
\]
with $\alpha_1,\alpha_2 \in  \Omega^1(M)$ and $\flat:{\mathfrak X}(M)\to \Omega^1(M)$ the following  isomorphism of $C^\infty(M,{\mathbb R})$-modules 
\begin{equation}\label{eq:bemol}
\flat(X)=i_X\omega + \eta(X)\eta,\;\;\; X\in {\mathfrak X}(M).
\end{equation}
Cosymplectic manifolds are the odd dimensional analogues of symplectic manifolds in  the Poisson framework. As in the case of symplectic manifolds, there is a Darboux Theorem for these kind of manifolds, which states that there always exist local coordinates $(q^i, p_i,t)$ on $M$ such that the local expression of the cosymplectic structure and the associated Reeb vector field are
\begin{equation}\label{eq:local}
\omega=dq^i\wedge dp_i,\;\;\,\; \eta=dt,\;\;\;\; {\mathcal R}=\frac{\partial }{\partial t}.
\end{equation}
Now, additionally, we suppose that we have a Hamiltonian function $H:M\to {\mathbb R}.$ 
Denote by  $X_H^{(\omega,\eta)}\in {\mathfrak X}(M)$ the {\it Hamiltonian vector field of $H$ } characterized by the following conditions
\begin{equation}\label{XH}
i_{X_H^{(\omega,\eta)}}\omega=dH-{\mathcal R}(H)\eta \mbox{ and } i_{X_H^{(\omega,\eta)}}\eta=0
\end{equation}
and by $E^{(\omega,\eta)}_H$ the {\it evolution vector field }defined as the sum of $X^{(\omega,\eta)}_H$ and the Reeb vector field ${\mathcal R}$, \emph{i.e.} 
\begin{equation}
\label{eq:EH}
E^{(\omega,\eta)}_H := X^{(\omega,\eta)}_H + \mathcal{R}.
\end{equation}
 
Introducing \eqref{eq:local} and \eqref{XH} into \eqref{eq:EH}, we obtain the local expressions of the Hamiltonian vector field of $H$ and of the evolution vector field, which are
\begin{equation}\label{XHEH}
X_H^{(\omega,\eta)}=\frac{\partial H}{\partial p_i}\frac{\partial }{\partial q^i}-\frac{\partial H}{\partial q^i}\frac{\partial }{\partial p_i},\;\;\;E^{(\omega,\eta)}_H=\frac{\partial H}{\partial p_i}\frac{\partial }{\partial q^i}-\frac{\partial H}{\partial q^i}\frac{\partial }{\partial p_i}+ \frac{\partial }{\partial t}.
\end{equation}
 
Unlike for the Reeb vector field,  the cosymplectic structure $(\omega,\eta)$ is not, in general,  invariant with respect to the Hamiltonian and the evolution vector fields. In fact, we have  
\[
{\mathcal L}_{X_H^{(\omega,\eta)}}\omega={\mathcal L}_{E^{(\omega,\eta)}_H}\omega=-d({\mathcal R}(H))\wedge \eta.
\]
For more information on cosymplectic geometry and the previous dynamical vector fields see, for instance, \cite{CNY} and the references therein. The geometric structure of the cosymplectic manifold captures how the dynamics of the system evolves over time. 
 
It is important to remark that there are several geometric formulations of time-dependent Hamiltonian systems using cosymplectic structures. We will recall  two of them in what follows. The common ingredients of these formulations are: 
 \begin{itemize}
\item A fibration $\Pi:Q\to {\mathbb R}$ with total space a manifold $Q$ of dimension $n.$ 
\item The vertical vector subbundle $\Pi_{10}:V\Pi\to Q$  of the tangent bundle $\pi_Q:TQ\to Q$ induced by $\Pi$ whose fiber at $q\in Q$ is 
$$V_q\Pi=\{v_q\in T_q Q/ T_q\Pi(v_q)=0\}.$$
We denote by $\Pi_{10}^*:V^*\Pi\to Q$ the dual vector bundle of $\Pi_{10}$ and by $\Pi_{1}^*:V^*\Pi\to {\mathbb R}$ the composition $\Pi_{1}^*=\Pi\circ \Pi_{10}^*.$
\item The principal ${\mathbb R}$-bundle $\nu:T^*Q\to V^*\Pi$ dual of the inclusion $\iota:V\Pi\to TQ$. Note that the principal action of the additive group
$\R$ on $T^*Q$ is given by $t\cdot \alpha _q=\alpha _q+t\Pi ^*(dt)(q)$, for $t\in 
\R$ and $\alpha _q\in T^*_qQ$. 
\item A Hamiltonian section (a smooth section of $\nu$) $h:V^*\Pi\to T^*Q.$
\end{itemize}
The following diagram illustrates the previous situation
\[
\xymatrix{&T^*Q\ar[dr]_{{\nu}}\ar[dl]_{\pi_Q}&&\\
Q  \ar[d]^{\Pi}&&V^*\Pi\ar[dll]^{\Pi_1^*}\ar@/_/[ul]_h\ar[ll]_{\Pi_{10}^*}
&\\
{\mathbb R}&&&
}
\]
The local expressions of these elements are the following. Since  $\Pi:Q\to {\mathbb R}$ is a submersion then  one may consider local coordinates $(q^i,t)$ on $Q$ adapted to $\Pi$ such that the local expression of $\Pi$ is $\Pi(q^i,t)=t$. Denote by   $(q^i,t,p_i,p)$ (resp., $(q^i,t,p_i)$) the 
corresponding local coordinates on $T^*Q$ (resp., on $V^*\Pi$). With respect to them, we have that   
\[
\Pi_{10}^*(q^i,t,p_i)=(q^i,t),\;\;\; \Pi_1^*(q^i,t,p_i)=t \mbox{ and } \nu(q^i,t,p_i,p)=(q^i,t,p_i).
\]
Moreover, the  Hamiltonian section  $h:V^*\Pi\to T^*Q$ is given by 
\begin{equation}\label{H}
 h(q^i,t,p_i)=(q^i,t,p_i,-H_h(q^i,t,p_i)).
\end{equation}
with $H_h$  a local function on $V^*\Pi.$

With these elements the two different formulations on the restricted phase space of momenta $V^*\Pi$ are given as follows. 

\begin{itemize}
 \item[a)]{\it \sc Reeb cosymplectic formalism \cite{ChLeMa,LMM}.}
 In this first formalism,  the cosymplectic structure on $V^*\Pi$ is 
 \begin{equation}\label{Oh}
 (\omega_h=h^*\omega_Q, \eta=(\Pi_1^*)^*(dt)),
 \end{equation}
where $\omega_Q$ is the canonical symplectic structure on $T^*Q$. Then, the 
integral curves of the corresponding Reeb vector field ${\mathcal R}_h$ are the 
solutions of {Hamilton equations} for $h$. 

The local expression of the cosymplectic structure is
\begin{equation}\label{wh}
 \omega_h = d q^i\wedge dp_i + \frac{\partial H_h}{\partial q^i}d q^i\wedge d t +\frac{\partial{H_h}}{\partial p_i}d p_i\wedge d t, \;\;\;\;\; \eta=d t.
\end{equation}
Thus,  the Reeb vector field  ${\mathcal R}_h$ of the cosymplectic structure $(\omega_h,\eta)$ has  the following  local expression
\begin{equation}\label{R}
{\mathcal R}_h  = \frac{\partial }{\partial t} +  \frac{\partial H_h}{\partial p_i} \frac{\partial }{\partial q^i} -  \frac{\partial H_h}{\partial q^i} \frac{\partial }{\partial p_i}, 
\end{equation}
and the integral curves of ${\mathcal R}_h$ are the solutions of \emph{the Hamilton equations} for $h$ given by  
\begin{equation}\label{HE}
\dot{t}=1,\quad\quad \frac{dq^i}{dt} = \frac{\partial H_h}{\partial p_i}, \quad\quad \frac{dp_i}{dt} = - \frac{\partial H_h}{\partial q^i}, \qquad \mbox{for all } i \in \{1,\ldots,n\}.
\end{equation}

The following diagram summarizes the formalism 
$$
\xymatrix{&(T^*Q,\omega_Q)\ar[dr]_{\nu}\ar[dl]_{\pi_Q}&&\\
Q  \ar[d]^{\Pi}&&(V^*\Pi,{(\omega_h,\eta}),{\mathcal R}_h)\ar[dll]^{\Pi_1^*}\ar@/_/[ul]_h\ar[ll]_{\Pi_{10}^*}
&\\
{\mathbb R}&&&}
$$

 \item[b)]{\sc Evolution cosymplectic formalism.}  Some ideas on this formalism are contained in \cite{EMR}. In this case,  we consider  an  Ehresmann connection of $\Pi$, that is,   a vector field 
 $Y\in {\mathfrak X}(Q)$ such that  
 \begin{equation}\label{Er}
 \Pi^*(dt)({Y})=1.
 \end{equation}
 
Now, we define the  $1$-form $\lambda_Y$ on $V^*\Pi$ imitating the construction of the standard Liouville $1$-form on $T^*Q$
$$\lambda_Y(\alpha)(X_\alpha)=\alpha(T_\alpha\Pi_{10}^*(X_\alpha)-\eta(T_\alpha\Pi_{10}^*(X_\alpha))Y(\Pi_{10}^*(\alpha)))$$
for all  $\alpha\in V^*\Pi$ and $X_\alpha\in T_\alpha(V^*\Pi).$
Here $\eta$ is the $1$-form on $V^*\Pi$ defined in (\ref{Oh}). 

Note that $\lambda _Y$ is well defined. In fact, from (\ref{Er}),
we have that $T_\alpha\Pi_{10}^*(X_\alpha)-\eta(T_\alpha\Pi_{10}^*(X_\alpha))Y(\Pi_{10}^*(\alpha))\in V\Pi.$

If  the local expression of the vector field $Y$ is 
\[
Y(q,t)=Y^i(q,t)\frac{\partial }{\partial q^i} + \frac{\partial }{\partial t},
\] 
with $Y^i$  functions on the corresponding coordinated neighborhood on $Q,$ the local expressions of the $1$-forms $\lambda_Y$ and $\eta$  are  
\[
\lambda_Y=p_idq^i-Y^ip_idt,\mbox{ and } \eta=dt.
\]

Consider the $2$-form $\omega_Y:=-d\lambda_Y$ on $V^*\Pi$ whose local expression is 
\begin{equation}\label{wY}
\omega_Y=dq^i\wedge dp_i + (\frac{\partial Y^i}{\partial q^j}p_idq^j + Y^idp_i)\wedge dt.
\end{equation} 
Then $(\omega_Y,\eta)$ is a cosymplectic structure on $V^*\Pi$ with Reeb vector field $R_Y,$ whose local expression is 
$$R_Y=\frac{\partial}{\partial t} + Y^i\frac{\partial }{\partial q^i}-\frac{\partial Y^i}{\partial q^j}p_i\frac{\partial }{\partial p_j}$$

Now, let $h:V^*\Pi\to T^*Q$ be a Hamiltonian section of $\nu:T^*Q\to V^*\Pi$ with local expression 
$$h(q^i,p_i,t)=(q^i,p_i,t,-H_h(q^i,p_i,t))$$
with $H_h$ a local function on $V^*\Pi.$ 

Let $H_h^Y$ be the function on $V^*\Pi$ defined by 
$$H_h^Y=-Y^l\circ h$$ 
where $Y^l:T^*Q\to {\mathbb R}$ is the fiberwise linear function induced by $Y$, i.e.
\[
Y^l(\alpha_q)=\alpha_q(Y(q)), \mbox{ for all } q\in Q \mbox{ and } \alpha_q\in T^*_qQ.
\]
The local expression of $H_h^Y$ is 
\[H_h^Y(q^i,p_i,t)=-Y^ip_i + H_h(q^i,p_i,t).\]

The Hamiltonian vector field $X_{H_h^Y}^{(\omega_Y,\eta)}$ of this function with respect to the cosymplectic structure $(\omega_Y,\eta)$ is given locally by  
$$X_{H_h^Y}^{(\omega_Y,\eta)}=(\frac{\partial H_h}{\partial p_i}-Y^i)\frac{\partial }{\partial q^i}-( \frac{\partial H_h}{\partial q^i}-\frac{\partial Y^j}{\partial q_i}p_j)\frac{\partial }{\partial p_i}.$$

In this case, the local expression of the evolution vector field $E^{(\omega,\eta)}_{H_h^Y}$ is 
$$E^{(\omega,\eta)}_{H_h^Y} = \frac{\partial }{\partial t} +  \frac{\partial H_h}{\partial p_i} \frac{\partial }{\partial q^i} -  \frac{\partial H_h}{\partial q^i} \frac{\partial }{\partial p_i}.$$
So, $E^{(\omega,\eta)}_{H^Y_h}$ is just the Reeb vector field ${\mathcal R}_h$ (see (\ref{R})) of the cosymplectic structure $(\omega_h,\eta)$ defined in the Reeb cosymplectic formalism described previously. 

 The following diagram shows the ingredients of this formalism 
\[  
\xymatrix{&(T^*Q,\lambda_Q)\ar[dr]_{p_1}\ar[r]^{\hspace{1cm}-Y^l}\ar[dl]_{\pi_Q}&{\mathbb R}&\\
Q  \ar[d]^{\Pi}&&(V^*\Pi,{(\omega_Y=-d{\lambda_Y},(\Pi_1^*)^*(dt)}),E^{(\omega,\eta)}_{H_h^Y})\ar[u]_{H^Y_h}\ar[dll]^{\Pi_1^*}\ar@/_/[ul]_h\ar[ll]_{\Pi_{10}^*}
&\\
{\mathbb R}&&&
}
\]
\end{itemize}
Moreover, from (\ref{wh}) and (\ref{wY}), we conclude that the $2$-forms $\omega_h$ and $\omega_Y$  of the cosymplectic  structures of both formalisms  are related by the formula 
\[
\omega_h= \omega_Y+  dH_h^Y\wedge (\Pi_1^*)^*(dt).
\]
In fact, a straightforward computation proves that every  cosymplectic dynamics of a Hamiltonian system determined by its evolution vector field can be seen as a Reeb dynamics associated to other cosymplectic structure on the same space (see, for instance, \cite{JoLu}).
\begin{proposition} \label{cc}
Let $(M,\omega,\eta)$ be a cosymplectic manifold and $H:M\to {\mathbb R}$ be a Hamiltonian function. Then, the pair $(\omega_H,\eta)\in \Omega^2(M)\times \Omega^1(M)$, with
\begin{equation}\label{wH}
\omega_H= \omega+  dH\wedge \eta,
\end{equation}
is a new cosymplectic structure on $M$ such that the evolution vector field $E^{(\omega,\eta)}_H$ of $H$ with respect to  $(\omega,\eta)$
is just the Reeb vector field ${\mathcal R}_H$  of $(\omega_H,\eta).$
\end{proposition}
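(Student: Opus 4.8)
The plan is to verify the two assertions separately: first that $(\omega_H,\eta)$ is again a cosymplectic structure, and then that the evolution vector field coincides with the new Reeb vector field. The single algebraic fact that drives both parts is the nilpotency of the correction term, namely $(dH\wedge\eta)\wedge(dH\wedge\eta)=0$, which follows immediately from $dH\wedge dH=0$ together with the anticommutation of one-forms.

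For the cosymplectic condition, I would first check closedness: since $\omega$, $dH$, and $\eta$ are all closed ($\eta$ by hypothesis), one has $d\omega_H=d(dH\wedge\eta)=-dH\wedge d\eta=0$, and $\eta$ is unchanged. The substance is the volume-form condition. Using the nilpotency above, the binomial expansion collapses to $\omega_H^n=\omega^n+n\,\omega^{n-1}\wedge(dH\wedge\eta)$. Wedging with $\eta$ and moving the extra factor of $\eta$ through $\omega^{n-1}$ (even degree) and $dH$ shows the second term carries a factor $\eta\wedge\eta=0$ and therefore vanishes, so $\eta\wedge\omega_H^n=\eta\wedge\omega^n$. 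Since the right-hand side is a volume form by hypothesis, so is the left, and $(\omega_H,\eta)$ is cosymplectic.

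For the second assertion, I would show that $E^{(\omega,\eta)}_H$ satisfies the two defining equations \eqref{reeb} of the Reeb vector field of $(\omega_H,\eta)$; uniqueness of the Reeb field then forces $E^{(\omega,\eta)}_H=\mathcal{R}_H$. The equation $i_{E^{(\omega,\eta)}_H}\eta=1$ is immediate from $i_{X_H^{(\omega,\eta)}}\eta=0$ and $i_{\mathcal{R}}\eta=1$. The equation $i_{E^{(\omega,\eta)}_H}\omega_H=0$ is where the construction pays off. Expanding $i_{E^{(\omega,\eta)}_H}\omega_H=i_{E^{(\omega,\eta)}_H}\omega+i_{E^{(\omega,\eta)}_H}(dH\wedge\eta)$, the first summand equals $dH-\mathcal{R}(H)\eta$ by \eqref{XH} and \eqref{reeb}, while the second expands as $E^{(\omega,\eta)}_H(H)\,\eta-dH$. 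A small auxiliary computation---contracting \eqref{XH} once more with $X_H^{(\omega,\eta)}$ and using $i_{X_H^{(\omega,\eta)}}\eta=0$ together with $\omega(X_H^{(\omega,\eta)},X_H^{(\omega,\eta)})=0$---gives $X_H^{(\omega,\eta)}(H)=0$, hence $E^{(\omega,\eta)}_H(H)=\mathcal{R}(H)$. Substituting, the two summands become $dH-\mathcal{R}(H)\eta$ and $\mathcal{R}(H)\eta-dH$, which cancel exactly.

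The computation is essentially routine; the only places demanding care are the bookkeeping of signs in the wedge products (in particular the vanishing of the second term in $\eta\wedge\omega_H^n$ and the precise sign in $i_{E^{(\omega,\eta)}_H}(dH\wedge\eta)$). The conceptual crux---and the reason the statement holds at all---is the identity $X_H^{(\omega,\eta)}(H)=0$, which produces the exact cancellation making $i_{E^{(\omega,\eta)}_H}\omega_H$ vanish; once this is in place, everything else is forced.
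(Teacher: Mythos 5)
Your proof is correct and complete. It is worth noting that the paper itself gives no proof of Proposition \ref{cc}: it is introduced with the remark that ``a straightforward computation proves'' the claim, and the only verification implicit in the text is in local coordinates, via the Darboux expressions \eqref{eq:local}, \eqref{XHEH} and the comparison of the two formalisms where $\omega_h=\omega_Y+dH_h^Y\wedge(\Pi_1^*)^*(dt)$ and the evolution field is seen to coincide with the Reeb field \eqref{R}. Your argument is the intrinsic, coordinate-free version of that computation, and both halves are sound: the nilpotency $(dH\wedge\eta)\wedge(dH\wedge\eta)=0$ correctly collapses the expansion of $\omega_H^n$ so that $\eta\wedge\omega_H^n=\eta\wedge\omega^n$ is a volume form (and closedness of $\omega_H$ uses only $d\eta=0$); and the identification $E^{(\omega,\eta)}_H=\mathcal{R}_H$ follows, as you say, from checking the two defining conditions \eqref{reeb} for $(\omega_H,\eta)$ and invoking uniqueness of the Reeb field. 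Your identification of $X_H^{(\omega,\eta)}(H)=0$ (obtained by contracting \eqref{XH} with $X_H^{(\omega,\eta)}$ and using skew-symmetry of $\omega$) as the crux is exactly right: it is what makes $i_{E^{(\omega,\eta)}_H}(dH\wedge\eta)=\mathcal{R}(H)\eta-dH$ cancel against $i_{E^{(\omega,\eta)}_H}\omega=dH-\mathcal{R}(H)\eta$. Compared with the coordinate route, your argument is slightly longer but manifestly global and independent of any choice of Darboux chart, which is arguably the better way to record this fact.
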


Therefore, the evolution dynamics is a Reeb dynamics and this fact is a motivation to use some results  on Reeb dynamics in the cosymplectic framework, for instance, reduction processes. In particular, in \cite{A} it is given a version of Marsden-Weinstein reduction for a cosymplectic manifold. In the following sections, we will  show the limitations for applying this reduction process to the Reeb and the evolution dynamics and how overcome them. 

\section{The limitations of Albert's reduction for  discussing  the Reeb and the evolution dynamics}
\label{sec:3}

\subsection{Albert's Theorem}
Firstly, we recall Albert's reduction theorem for cosymplectic manifolds.  

\begin{definition} Let $(M,\omega,\eta)$ be a cosymplectic manifold.  
An action $\phi:G\times M\to M$ is cosymplectic if 
\[
\phi_g^*(\omega)=\omega\mbox{ and } \phi_g^*(\eta)=\eta.\]
\end{definition}
In this case, we have that 
\begin{equation}\label{RO}
0={\mathcal L}_{\xi_M}\omega=di_{\xi_M}\omega,\qquad \;0={\mathcal L}_{\xi_M}\eta=d(\eta(\xi_M)),\end{equation}
where $\xi_M$ is the infinitesimal generator of $\phi$ associated with $\xi\in{\mathfrak g}$. Therefore,  $i_{\xi_M}\omega$ is a closed $1$-form and, if $M$ is connected, then $\eta(\xi_M)$ is constant. 

Moreover, one can prove that the Reeb vector field ${\mathcal R}$ of $(M,\omega,\eta)$ is $G$-invariant, that is,  
\[
T_x\phi_g({\mathcal R}(x))={\mathcal R}(\phi _g(x)),
\]
for all $g\in G$ and $x\in M.$ 

The following result shows that the presence of a cosymplectic action on a connected cosymplectic manifold implies the existence of a distinguished $1$-cocycle on the corresponding Lie algebra associated with the Lie group. 
\begin{proposition}
If an action $\phi:G\times M\to M$ is cosymplectic on a connected cosymplectic manifold $(M,\omega,\eta)$,  then the element $c_\eta:{\mathfrak g}\to \R$ of ${\mathfrak g}^*$ given by 
\begin{equation}\label{cocycle}
c_\eta(\xi)=\eta(\xi_M), \mbox{ for all }\xi\in {\mathfrak g},
\end{equation}
is a cocycle of ${\mathfrak g}$. As a consequence, the corresponding left invariant $1$-form $\overleftarrow{c_\eta}$ on $G$ is closed. 
\end{proposition}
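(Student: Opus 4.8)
The plan is to unwind what ``cocycle'' means here: with trivial coefficients, the Chevalley--Eilenberg differential of a linear functional $c\in\mathfrak{g}^*$ is $(\delta c)(\xi,\zeta)=-c([\xi,\zeta])$, so proving that $c_\eta$ is a cocycle amounts to showing that it annihilates the derived algebra, i.e. that $\eta([\xi,\zeta]_M)=0$ for all $\xi,\zeta\in\mathfrak{g}$. The crucial input, already available from \eqref{RO} together with the connectedness of $M$, is that each function $\eta(\xi_M)=c_\eta(\xi)$ is a genuine constant on $M$, and that $\mathcal{L}_{\xi_M}\eta=0$.

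First I would relate the bracket of fundamental vector fields to the value of $\eta$ on it by the Leibniz rule for the Lie derivative:
\[
\mathcal{L}_{\xi_M}\bigl(\eta(\zeta_M)\bigr)=(\mathcal{L}_{\xi_M}\eta)(\zeta_M)+\eta([\xi_M,\zeta_M]).
\]
The left-hand side vanishes because $\eta(\zeta_M)$ is constant, and the first term on the right vanishes by $\mathcal{L}_{\xi_M}\eta=0$; hence $\eta([\xi_M,\zeta_M])=0$. Since $\xi\mapsto\xi_M$ is a Lie algebra (anti-)homomorphism, $[\xi_M,\zeta_M]=\pm[\xi,\zeta]_M$, and the sign is immaterial: I conclude $c_\eta([\xi,\zeta])=\eta([\xi,\zeta]_M)=0$, so $\delta c_\eta=0$ and $c_\eta$ is a cocycle.

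For the final assertion I would invoke the standard description of the exterior derivative of a left-invariant $1$-form. If $\overleftarrow{c_\eta}$ denotes the left-invariant extension of $c_\eta\in\mathfrak{g}^*\cong T_e^*G$, then on left-invariant vector fields $\overleftarrow{\xi},\overleftarrow{\zeta}$ one has
\[
d\overleftarrow{c_\eta}(\overleftarrow{\xi},\overleftarrow{\zeta})=\overleftarrow{\xi}\bigl(\overleftarrow{c_\eta}(\overleftarrow{\zeta})\bigr)-\overleftarrow{\zeta}\bigl(\overleftarrow{c_\eta}(\overleftarrow{\xi})\bigr)-\overleftarrow{c_\eta}([\overleftarrow{\xi},\overleftarrow{\zeta}]).
\]
Here $\overleftarrow{c_\eta}(\overleftarrow{\zeta})=c_\eta(\zeta)$ is constant, killing the first two terms, while $[\overleftarrow{\xi},\overleftarrow{\zeta}]=\overleftarrow{[\xi,\zeta]}$ turns the last term into $-c_\eta([\xi,\zeta])$, which vanishes by the cocycle condition just proved. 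Since left-invariant vector fields span the tangent space at every point, $d\overleftarrow{c_\eta}=0$.

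I expect no serious obstacle: the computations are routine, and the only subtleties are fixing the coboundary convention that defines ``cocycle'' and tracking the sign in $[\xi_M,\zeta_M]=\pm[\xi,\zeta]_M$, neither of which affects the vanishing. The one genuinely load-bearing fact is that $\eta(\xi_M)$ is \emph{constant} rather than merely closed, which is exactly where connectedness of $M$ enters.
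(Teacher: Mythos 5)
Your proof is correct and essentially the same as the paper's: both reduce the cocycle condition to showing $\eta([\xi_M,\xi'_M])=0$ using the constancy of $\eta(\xi_M)$ (from connectedness) together with standard Cartan calculus — the paper via the intrinsic formula for $d\eta(\xi_M,\xi'_M)$ and closedness of $\eta$, you via the Leibniz rule for $\mathcal{L}_{\xi_M}$ and the invariance $\mathcal{L}_{\xi_M}\eta=0$, which are equivalent rearrangements of the same identity. Your explicit verification that $\overleftarrow{c_\eta}$ is closed is a harmless addition; the paper states this consequence without proof.
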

\begin{proof}
We have that, for all $\xi,\xi'\in{\mathfrak g},$
$$
c_\eta([\xi,\xi']_{\mathfrak g})=\eta(([\xi,\xi']_{\mathfrak g})_M)=-\eta([\xi_M,\xi'_M])=d\eta(\xi_M,\xi'_M)-\xi_M(\eta(\xi'_M))+\xi'_M(\eta(\xi_M))=0.$$

Here we have used that $\eta$ is a closed 1-form, $\eta(\xi _M )$ and $\eta (\xi '_M)$ are constant functions and the equality 
\[
([\xi ,\xi ']_{\mathfrak{g}})_M = - [\xi _M ,\xi '_M].
\]
\end{proof}
\begin{definition}
A cosymplectic action $\phi:G\times M\to M$ is Hamiltonian if there is a smooth map $J: M\to \mathfrak{g}^*$, called the momentum map, such that 
 \begin{equation}\label{Je}
 i_{\xi_M}\omega=dJ_\xi,\qquad  \mbox{ for all }\xi \in {\mathfrak g},
 \end{equation} 
where $J_\xi\colon M\to \R$ is the real $C^\infty$-function on $M$ given by
\[
J_\xi (x)=\langle J(x),\xi \rangle, \mbox{ for }x \in M.
\]
Moreover,  $J$ is $G$-equivariant if it is equivariant with respect to the action $\phi$ and the coadjoint representation $Ad^*:G\times {\mathfrak g}^*\to {\mathfrak g}^*$, that is, 
\[
J\circ \phi_g=Ad^*_g\circ J, \qquad \mbox{ for all } g\in G.
\]
\end{definition} 

Note that, in such a case, we have that ${\mathcal R}(J_\xi)=i_{\xi_M}i_{\mathcal R}\omega=0$, that is, for a Hamiltonian cosymplectic action with momentum map $J$, the functions $J_\xi$ are first integrals of the Reeb vector field.  This is a version, in the cosymplectic realm, of Noether theorem for Hamiltonian actions on symplectic manifolds (see, for instance, \cite{AM}).

In \cite{A} Albert proves the following reduction theorem for certain Hamiltonian cosymplectic actions. 
\begin{theorem}
 Let $\phi:G\times M\to M$ be a Hamiltonian cosymplectic action of a Lie group $G$ on the cosymplectic manifold  $(M,\omega,\eta)$   with $G$-equivariant momentum map $J:M\to {\mathfrak g}^*$ such that 
 \begin{equation}\label{cA}
 \eta(\xi_M)=0,\qquad \mbox{ for all }\xi\in {\mathfrak g}.
 \end{equation}
For a regular value $\mu\in {\mathfrak g}^*$ of $J$, suppose that the space of orbits $M_\mu=J^{-1}(\mu)/G_\mu$ is a manifold such that the canonical projection $\pi_\mu:J^{-1}(\mu)\to M_\mu$ is a submersion. Then,
 \begin{enumerate}
 \item[(i)] There exists a unique cosymplectic structure  $(\omega_\mu,\eta_\mu)$ on $M_\mu$ such that 
 \[
\pi_\mu^*(\omega_\mu)=\iota_\mu^*\omega\mbox{ and }\pi_\mu^*(\eta_\mu)=\iota_\mu^*\eta,
 \]
 where $\iota_\mu\colon J^{-1}(\mu )\to M$ is the canonical inclusion.
\item[(ii)] The vector field ${\mathcal R}$ restricts to $J^{-1}(\mu)$ and its restriction is $\pi_\mu$-projectable. Its projection is the Reeb vector field of  $(M_\mu,\omega_\mu,\eta_\mu).$
 \end{enumerate}
 \end{theorem}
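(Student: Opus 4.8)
The plan is to follow the strategy of the classical Marsden--Weinstein reduction, adapted to the fact that here $\omega$ is degenerate with $\ker\omega=\langle{\mathcal R}\rangle$. The whole argument rests on the pointwise identity $\ker dJ_x=(\mathfrak{g}\cdot x)^{\perp_\omega}$, an immediate consequence of the defining relation $i_{\xi_M}\omega=dJ_\xi$: a vector $v\in T_xM$ annihilates $dJ_x$ if and only if $\omega(\xi_M(x),v)=0$ for every $\xi\in\mathfrak{g}$. Here $(\mathfrak{g}\cdot x)^{\perp_\omega}$ denotes the $\omega$-orthogonal complement of $\mathfrak{g}\cdot x=T_x(G\cdot x)$, and $\perp_\omega$ is taken with respect to the degenerate form $\omega$.

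For part (i) I would first show that $\iota_\mu^*\omega$ and $\iota_\mu^*\eta$ descend to $M_\mu$, i.e. that they are basic for the $G_\mu$-action whose orbits are the fibres of $\pi_\mu$. $G_\mu$-invariance is inherited from the $G$-invariance of $\omega$ and $\eta$. Horizontality means that the contraction with every $\xi_M$, $\xi\in\mathfrak{g}_\mu$, vanishes on $J^{-1}(\mu)$: for $\eta$ this is exactly hypothesis \eqref{cA}, namely $\eta(\xi_M)=0$; for $\omega$ it follows because $i_{\xi_M}\omega=dJ_\xi$ and $J_\xi$ is constant on $J^{-1}(\mu)$, so that $\iota_\mu^*(i_{\xi_M}\omega)=d(\iota_\mu^*J_\xi)=0$ (note that $\xi_M$ is tangent to $J^{-1}(\mu)$ since $G_\mu$ preserves the level set, which lets me commute $\iota_\mu^*$ and $i_{\xi_M}$). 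Hence there are unique forms $\omega_\mu,\eta_\mu$ on $M_\mu$ with $\pi_\mu^*\omega_\mu=\iota_\mu^*\omega$ and $\pi_\mu^*\eta_\mu=\iota_\mu^*\eta$; both uniqueness and closedness follow from the injectivity of $\pi_\mu^*$ (as $\pi_\mu$ is a surjective submersion), since $\pi_\mu^*d\omega_\mu=\iota_\mu^*d\omega=0$, and likewise for $\eta_\mu$.

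The heart of the matter, and the step I expect to be the main obstacle, is verifying that $(\omega_\mu,\eta_\mu)$ is cosymplectic, i.e. the volume condition. I would compute the kernel of $\iota_\mu^*\omega$ pointwise by degenerate linear algebra. Writing $K=\langle{\mathcal R}(x)\rangle=\ker\omega_x$ and $W=\mathfrak{g}\cdot x$, one has $T_xJ^{-1}(\mu)=W^{\perp_\omega}$, and the kernel of the restricted form is $W^{\perp_\omega}\cap(W^{\perp_\omega})^{\perp_\omega}$. Using the dimension formula $\dim U^{\perp_\omega}=\dim T_xM-\dim U+\dim(U\cap K)$ valid for degenerate forms, one gets $(W^{\perp_\omega})^{\perp_\omega}=W+K$, and since $K\subseteq W^{\perp_\omega}$ the modular law yields $\ker(\iota_\mu^*\omega)_x=(W\cap W^{\perp_\omega})+K$. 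By infinitesimal equivariance $dJ_x(\xi_M(x))=-\operatorname{ad}^*_\xi\mu$, so $\xi_M(x)\in T_xJ^{-1}(\mu)$ precisely when $\xi\in\mathfrak{g}_\mu$; hence $W\cap W^{\perp_\omega}=\mathfrak{g}_\mu\cdot x=T_x(G_\mu\cdot x)$. Therefore $\ker(\iota_\mu^*\omega)_x=T_x(G_\mu\cdot x)+\langle{\mathcal R}(x)\rangle$. Passing to the quotient kills the orbit directions, so $\omega_\mu$ has corank exactly $1$ with kernel spanned by the projection of ${\mathcal R}$, provided ${\mathcal R}(x)\notin T_x(G_\mu\cdot x)$ --- which is again guaranteed by \eqref{cA}, since $\eta({\mathcal R})=1$ while $\eta$ vanishes on every orbit direction. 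As $\eta_\mu$ evaluates to $1$ on the projected Reeb direction (because $\pi_\mu^*(\eta_\mu(\bar{\mathcal R}))=\iota_\mu^*(\eta({\mathcal R}))=1$), the form $\eta_\mu\wedge\omega_\mu^{\,k}$ is a volume form, which completes (i).

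For part (ii), that ${\mathcal R}$ restricts to $J^{-1}(\mu)$ is the cosymplectic Noether theorem ${\mathcal R}(J_\xi)=0$ recalled before the theorem, so ${\mathcal R}$ is tangent to every level set. Its restriction is $\pi_\mu$-projectable because ${\mathcal R}$ is $G$-invariant, hence $G_\mu$-invariant, and $\pi_\mu\circ\phi_g=\pi_\mu$ for $g\in G_\mu$, so $T\pi_\mu({\mathcal R})$ is constant along the fibres. Denoting the projection by $\bar{\mathcal R}$, I would finally identify it as the Reeb field by pulling back the characterizing conditions and using injectivity of $\pi_\mu^*$ once more: $\pi_\mu^*(i_{\bar{\mathcal R}}\omega_\mu)=\iota_\mu^*(i_{\mathcal R}\omega)=0$ gives $i_{\bar{\mathcal R}}\omega_\mu=0$, and $\pi_\mu^*(i_{\bar{\mathcal R}}\eta_\mu)=\iota_\mu^*(i_{\mathcal R}\eta)=1$ gives $i_{\bar{\mathcal R}}\eta_\mu=1$.
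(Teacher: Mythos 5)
Your proof is correct. A point of comparison worth making explicit: the paper itself states this theorem without proof (it is quoted from Albert's article \cite{A}), so the natural benchmark is the paper's proof of its own generalization --- the mechanical presymplectic reduction Theorem \ref{thpr}, together with Proposition \ref{prop:4.6} and the algebraic Lemma \ref{Lema:4.5} in the appendix. Your argument is essentially that machinery specialized back to the cosymplectic case: your dimension formula $\dim U^{\perp_\omega}=\dim T_xM-\dim U+\dim(U\cap K)$ is a unified form of the two cases of Lemma \ref{Lema:4.5}, and your double-orthogonal computation $\ker(\iota_\mu^*\omega)_x=(W\cap W^{\perp_\omega})+K=T_x(G_\mu\cdot x)\oplus\langle{\mathcal R}(x)\rangle$, with equivariance used to identify $W\cap W^{\perp_\omega}$ with the $\mathfrak{g}_\mu$-orbit directions, is exactly the content of Proposition \ref{prop:4.6} (the paper intersects $T_xJ^{-1}(\mu)$ with $T_x(G\cdot x)\oplus\langle{\mathcal R}(x)\rangle$ directly where you invoke the modular law --- the same computation phrased differently). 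What you add, and what Albert's statement needs beyond the presymplectic one, are the steps involving $\eta$: horizontality of $\iota_\mu^*\eta$ from the hypothesis $\eta(\xi_M)=0$, the observation that this hypothesis forces ${\mathcal R}(x)\notin T_x(G\cdot x)$ (which in the paper's presymplectic setting is instead an assumption, condition (ii) of Definition \ref{D}), the evaluation $\eta_\mu(\bar{\mathcal R})=1$, and the resulting volume-form condition $\eta_\mu\wedge\omega_\mu^k\neq 0$; all of these are handled correctly.
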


\begin{remark}
 In \cite{A}, Albert  omits the condition ``$J$ is $G$-equivariant'' because one can change the coadjoint action $Ad^*:G\times {\mathfrak g}^*\to {\mathfrak g}^*$ for 
the affine representation 
\[
Af^\sigma: G\times {\mathfrak g}^*\to {\mathfrak g}^*,\qquad  Af^\sigma_g(\mu)=Ad_g^*\mu + \sigma(g),
\]
where $\sigma: G\to {\mathfrak g}^*$ is the cocycle of the cohomology of $G$ with values in ${\mathfrak g}^*$ given by 
$$\sigma(g)=J(\phi_g(x))-Ad^*_gJ(x), \qquad \mbox{ for all }x\in M.$$
This cocycle doesn't  depend of the chosen point $x. $ So, the map $J$ is $G$-equivariant with respect to $\phi$ and  $Af^\sigma.$ Moreover, i) and ii) from the previous Theorem are satisfied when changing $M_\mu$ by the space $J^{-1}(\mu)/\tilde G_\mu$, where $\tilde G_\mu$ is the isotropy subgroup of $\mu$ with respect to $Af^\sigma$. For more details see \cite{LuMaZa}.

\label{rem:aff}
\end{remark}

\subsection{The limitations of Albert's Theorem for Reeb dynamics}

The first comment about Albert's  reduction process is that there exist  several interesting time-dependent examples where the condition (\ref{cA}) doesn't work. In general, if $M$ is connected, we have only that $\eta(\xi_M)=constant,$ for each $\xi\in {\mathfrak g},$ but not necessarily zero. In such a case, Albert's Theorem doesn't work.

In what follows, we will show a simple example in which this situation holds. 

\begin{example}\label{Ex:N-Albert}
    We consider the standard cosymplectic structure on $M =  T^*\R^{N}\times  \mathbb R \simeq \mathbb R^{2N+1}$ given by  $$\omega = d q^i \wedge d p_i\mbox{ and }\eta = d t,$$
and the Lie group action $\phi: \R \times M \to M$,  defined by 
\begin{equation*}
    \phi (s, (q^i,p_i,t)) = (q^1-v s,q^\alpha,p_i,t + s) .
\end{equation*}
Hereafter, the Latin indexes run from $1$ to $N$ while Greek indexes run from $2$ to $N$. 

Clearly, the cosymplectic structure $(\omega,\eta)$ is invariant with respect this action, and therefore, $\phi$ is a cosymplectic action. The fundamental vector fields associated to this action are given by
\begin{equation}\label{xiM}
    \xi_M =  \xi(\frac{\partial}{\partial t}-v \frac{\partial}{\partial q^1}), \mbox{ for all } \xi\in \R.
\end{equation}
Note that $\eta (\xi_M) = \xi$ and, in consequence, this action doesn't satisfy \eqref{cA}, that is, Albert's theorem cannot be applied.  
\end{example}

The condition \eqref{cA} is necessary in Albert's Theorem  in order to obtain the reduced $1$-form $\eta_\mu$ on $M_\mu =J^{-1}(\mu )/G_\mu$. In fact, in order to assure the existence of the 1-form $\eta _\mu$ on $M_\mu$, we must guarantee that $i^*_\mu\eta$ is a $\pi_\mu$-basic 1-form on $J^{-1}(\mu )$. This, in particular, implies that
\[
\langle i^*_\mu \eta ,\xi _{J^{-1}(\mu)}\rangle =0, \quad \mbox{ for all } \xi \in \mathfrak{g}_\mu.
\] 
A first strategy for overcoming this limitation can be to modify the action and the momentum map in such a way that, with these new elements, Albert's conditions hold. In fact, this is possible, under certain integrability conditions.
\begin{proposition}\label{ns}
Let $\phi:G\times (M, \omega,\eta)\to (M, \omega,\eta)$ be a  Hamiltonian cosymplectic action on the connected cosymplectic manifold $ (M, \omega,\eta)$ with $G$-equivariant momentum map $J:M\to  {\mathfrak g}^*$. 

If  the Reeb vector field ${\mathcal R}$ associated with the cosymplectic structure $(\omega,\eta)$  is complete with flow
$\Phi^{\mathcal R}:{\mathbb R}\times M\to M$
and the left-invariant $1$-form $\overleftarrow{c_\eta}$ induced by the cocycle $c_\eta,$ defined in (\ref{cocycle}), is exact, then there is a unique multiplicative function $f_\eta:G\to {\mathbb R}$ such that $\overleftarrow{c_\eta}=df_\eta$ and the map $\widetilde{\phi}:G\times M\to M$ given by 
\begin{equation}\label{mac}
\widetilde\phi_g(x)=\Phi^{\mathcal R}_{f_\eta(g^{-1})}(\phi_g(x)),\qquad \mbox{ for all } g\in G \mbox{ and }x\in M,
\end{equation}
defines a Hamiltonian cosymplectic action with $G$-equivariant momentum map ${J}:M\to {\mathfrak g}^*$ such that 
\[\eta(\widetilde\xi_M)=0,\]
where $\widetilde{\xi}_M$ is the infinitesimal generator of $\xi\in {\mathfrak g},$ with respect to $\widetilde{\phi}$. 
\end{proposition}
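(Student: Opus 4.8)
The plan is to proceed in three stages: first build the homomorphism $f_\eta$, then verify that $\widetilde\phi$ is an action and compute its infinitesimal generators, and finally deduce the remaining properties, all of which will fall out once the generators are known.

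First I would produce $f_\eta$. By the previous proposition the left-invariant $1$-form $\overleftarrow{c_\eta}$ is closed, and by hypothesis it is exact; choosing a primitive and normalizing it to vanish at the identity $e$, I get a smooth $f_\eta:G\to\mathbb{R}$ with $df_\eta=\overleftarrow{c_\eta}$ and $f_\eta(e)=0$. To see that $f_\eta$ is multiplicative (i.e. $f_\eta(gh)=f_\eta(g)+f_\eta(h)$), I would use left-invariance: for each $h\in G$, $L_h^*(df_\eta)=L_h^*\overleftarrow{c_\eta}=\overleftarrow{c_\eta}=df_\eta$, so $f_\eta\circ L_h-f_\eta$ is locally constant, hence constant on $G$ (which we take to be connected); evaluating at $e$ identifies that constant as $f_\eta(h)$. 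The normalization then yields uniqueness among multiplicative primitives, together with $f_\eta(g^{-1})=-f_\eta(g)$ and $(df_\eta)_e=c_\eta$.

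Next I would show $\widetilde\phi$ is a left action and identify $\widetilde\xi_M$. The key external input is that the Reeb vector field is $G$-invariant, which gives $\phi_g\circ\Phi^{\mathcal R}_s=\Phi^{\mathcal R}_s\circ\phi_g$ for all $g$ and $s$; combining this with the additivity of the complete Reeb flow and the multiplicativity of $f_\eta$, a direct composition shows $\widetilde\phi_g\circ\widetilde\phi_h=\widetilde\phi_{gh}$ and $\widetilde\phi_e=\mathrm{id}$. For the generators I would differentiate $s\mapsto\Phi^{\mathcal R}_{f_\eta(\exp(s\xi)^{-1})}(\phi_{\exp(s\xi)}(x))$ at $s=0$; since the flow parameter vanishes at $s=0$ with derivative $-c_\eta(\xi)$ and $\Phi^{\mathcal R}_0=\mathrm{id}$, the chain rule gives the clean formula
\[
\widetilde\xi_M=\xi_M-c_\eta(\xi)\,{\mathcal R}=\xi_M-\eta(\xi_M)\,{\mathcal R}.
\]
Every remaining assertion follows from this. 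Since $i_{\mathcal R}\eta=1$ by \eqref{reeb}, one gets $\eta(\widetilde\xi_M)=\eta(\xi_M)-\eta(\xi_M)=0$. As $\widetilde\phi_g$ is the composition of $\phi_g$ with a Reeb-flow map, and both preserve $\omega$ and $\eta$ (the latter because ${\mathcal L}_{\mathcal R}\omega={\mathcal L}_{\mathcal R}\eta=0$ by \eqref{ROE}), the action $\widetilde\phi$ is again cosymplectic. Using $i_{\mathcal R}\omega=0$ and the original relation \eqref{Je}, $i_{\widetilde\xi_M}\omega=dJ_\xi-c_\eta(\xi)\,i_{\mathcal R}\omega=dJ_\xi$, so the same $J$ is a momentum map for $\widetilde\phi$; and since $J$ is constant along the Reeb flow by the Noether property ${\mathcal R}(J_\xi)=0$, one has $J\circ\widetilde\phi_g=J\circ\phi_g=\mathrm{Ad}^*_g\circ J$, giving $G$-equivariance for $\widetilde\phi$.

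I expect the main obstacle to be the construction of the multiplicative $f_\eta$: exactness of $\overleftarrow{c_\eta}$ yields a primitive only up to a constant, and upgrading it to a genuine group homomorphism is exactly where connectedness of $G$ and left-invariance of the form are essential (completeness of $\mathcal R$ enters separately, to make $\widetilde\phi$ well defined and to get the flow semigroup law). Once the formula $\widetilde\xi_M=\xi_M-\eta(\xi_M)\,{\mathcal R}$ is established, the action axioms and the cosymplectic, momentum-map and equivariance properties are essentially immediate.
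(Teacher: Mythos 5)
Your proof is correct and its core coincides with the paper's argument: both rest on the commutation of the Reeb flow with $\phi_g$ (coming from the $G$-invariance of $\mathcal{R}$), the multiplicativity of $f_\eta$, the formula $\widetilde\xi_M=\xi_M-c_\eta(\xi)\,\mathcal{R}=\xi_M-\eta(\xi_M)\,\mathcal{R}$, and then the same one-line verifications that $\widetilde\phi$ is cosymplectic, that $J$ is still a momentum map, and that $\eta(\widetilde\xi_M)=0$. You go beyond the paper in two places, both to your credit: first, you actually construct the multiplicative primitive $f_\eta$ (normalizing a primitive at $e$ and using left-invariance of $\overleftarrow{c_\eta}$ to show $f_\eta\circ L_h-f_\eta$ is constant, equal to $f_\eta(h)$), whereas the paper's proof simply takes its existence and uniqueness for granted; note that this step, and the uniqueness assertion itself, require $G$ to be connected, a hypothesis the paper leaves implicit (it only assumes $M$ connected). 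Second, you verify the $G$-equivariance of $J$ with respect to $\widetilde\phi$ via the Noether property $\mathcal{R}(J_\xi)=0$, a point the paper's proof omits entirely. The only cosmetic difference in the main computation is that you obtain $\widetilde\xi_M$ by differentiating $s\mapsto\Phi^{\mathcal R}_{f_\eta(\exp(s\xi)^{-1})}(\phi_{\exp(s\xi)}(x))$ with the chain rule, while the paper factors $\widetilde\phi_x$ through $M\times\mathbb{R}$ and differentiates the composition; these are the same calculation in different packaging.
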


\begin{proof}
The fact that $\phi$ is a Hamiltonian cosymplectic action implies that $\mathcal{R}$ is invariant under the action $\phi$, or equivalently, $[\xi_M,{\mathcal R}]=0$, for all $\xi\in {\mathfrak g}.$ Therefore, if $g\in G,$ the flow $\Phi ^{\mathcal{R}}_t$ of $\mathcal{R}$ commutes with $\phi _g$. Using this relation and the fact that $f_\eta$ is multiplicative, it is easy to prove that $\widetilde{\phi}$ is an action of $G$ on $M$. 

Next, we will see that 
\[
\widetilde{\xi}_M=\xi_M- \eta(\xi_M) \mathcal{R}, \mbox{ for }\xi\in {\mathfrak g}.
\]
In fact, we have that 
\[
\widetilde{\xi}_M(x)=(T_e\widetilde{\phi}_x)(\xi ), \mbox{ for }x\in M,
\]
where $\widetilde{\phi}_x \colon G\to M$ is the map given by 
\[
\widetilde{\phi}_x(g)=\widetilde{\phi}_g(x)=\Phi _{f_\eta (g^{-1})} ^{\mathcal{R}} (\phi _g(x)), \mbox{ for }g\in G.
\]
So, $\widetilde{\phi}_x$ may be seen as the following composition 
\[
\xymatrix{
\widetilde{\phi}_x\colon \ar[rr]^{(f_\eta \circ i, \phi _x)} G&& M\times {\mathbb R} \ar[rr]^{\Phi ^{\mathcal{R}}} && M
}
\]
where $i\colon G\to G$ is the inversion in $G$. This implies that 
\[
\widetilde{\xi}_M=(T_{(0,x)}\Phi ^\mathcal{R} )(-c_\eta (\xi )\frac{\partial}{\partial t}_{|t=0}, \xi _M (x))
\]
where, in the last equality, we have used that 
\[
df_\eta (e)=c_\eta, \qquad T_ei=-Id,
\]
with $e\in G$ the identity element in $G$. Thus, since $\Phi ^\mathcal{R}$  is the flow of $\mathcal{R}$, we conclude that
\[
\widetilde{\xi}_M(x)=\xi_M(x)-c_\eta (\xi )\mathcal{R}(x)=
\xi_M(x)-\eta (\xi _M)(x) \mathcal{R}(x).
\]
Now, from \eqref{mac} and, using that the action $\phi \colon G\times M\to M$ is cosymplectic and that $\mathcal{R}$ preserves $\omega$ and $\eta$, it follows that
\[
\widetilde{\phi}_g^\ast \omega = \phi _g^* ((\Phi ^\mathcal{R}_{f_\eta (g^{-1})} )^* (\omega ))=\omega,\qquad  \widetilde{\phi}_g^\ast \eta = \phi _g^* ((\Phi ^\mathcal{R}_{f_\eta (g^{-1})} )^* (\eta ))=\eta.
\]
So, $\widetilde{\phi}\colon G\times M\to M$ is a cosymplectic action.

On the other hand, if $\xi\in \g,$
\[
i_{\widetilde\xi_M }\omega =i_{\xi_M}\omega- \eta(\xi_M) i_\mathcal{R}\omega =dJ_\xi,
\]
which means that the action $\widetilde{\phi}$ is Hamiltonian with momentum map $J\colon M\to \mathfrak{g}^*$. 

Finally, since $\eta(\mathcal{R})=1$, then 
\[
\eta(\widetilde\xi_M)=\eta(\xi_M)-\eta(\xi_M) \eta({\mathcal R})=0.
\]
\end{proof}
However, if we are interested in using the reduction of $(M,\omega,\eta)$ with the action $\widetilde\phi$ to discuss the reduction of Reeb dynamics, the problem is that we need to know the flow of the Reeb vector field before making the reduction. So, Proposition \ref{ns}, although it may be useful at a theoretical level, is not useful for reducing to Reeb dynamics. 

\subsection{ The limitations of Albert's Theorem for evolution dynamics}

Now, suppose that $H:M\to {\mathbb R}$ is a $G$-invariant Hamiltonian function, i.e.
$$H\circ \phi_g=H.$$

In what follows, we will look at the problems that arise when we try to apply Albert's reduction Theorem to the evolution dynamics induced by $H$.

If our initial cosymplectic structure $(\omega,\eta)$ satisfies the condition (\ref{cA}) of Albert's Theorem, from (\ref{Je}),  the condition (\ref{cA}) and the invariance of $H$, we deduce that 
$$X_H^{(\omega,\eta)}(J_\xi)=i_{X_H^{(\omega,\eta)}}i_{\xi_M}\omega=-\xi_M(H)+{\mathcal R}(H)\eta(\xi_M)=0,$$
which implies that the functions $J_\xi$ are first integrals of $X_H^{(\omega,\eta)}$. On the other hand, since the action is Hamiltonian, then  functions $J_\xi$ are first integrals of ${\mathcal R}$.  Therefore, they are first integrals  of $E^{(\omega ,\eta )}_H$. 

From the dynamical point of view, the functions $J_\xi$ should be  first integrals of $E^{(\omega ,\eta )}_H$, but  they do not need to be first integrals of both vector fields ${\mathcal R}$ and ${X}_H^{(\omega,\eta)}.$ 

The following example (related with Example \ref{Ex:N-Albert}) illustrates a physical system where the momentum map defines  first integrals of $E^{(\omega,\eta)}_H$ but not of ${\mathcal R}$ and $X_H^{(\omega,\eta)}.$

\begin{example}\label{example1}{\bf $N$-dimensional harmonic oscillator seen by an observer that moves with a constant velocity.}
{
Consider a classical particle of mass $m$ moving in a harmonic oscillator potential with frequency $\Omega$ in $N$ dimensions. The origin of the harmonic potential is assumed to be $\mathbf 0 = (0,\ldots,0) \in \mathbb R^N$. Let us suppose that this system is described by an inertial observer that moves with constant velocity $\mathbf{v} = (v_1,\ldots,v_N)$. Without any lack of generality, we can take $\mathbf{v} = (v,0,\ldots,0)$, and thus the Hamiltonian function of this system is given by
\begin{equation*}
    H(q^i, p_i, t) = \frac{1}{2m} \sum_{i=1}^N p_i^2 + \frac{1}{2} m \Omega^2 \bigg( (q^1+v t)^2 + \sum_{\alpha=2}^N (q^\alpha)^2 \bigg) .
\end{equation*}
Here, the Latin indexes run from $1$ to $N$ while Greek indexes run from $2$ to $N$. 
In this case  
\begin{equation*}
    dH=\frac{1}{m} \sum_{i=1}^N p_i d p_i + m \Omega^2 \bigg( (q^1+v t) d q^1 + \sum_{\alpha=2}^N q^\alpha d q^\alpha + (q^1 + v t) v dt \bigg).
\end{equation*}

We consider the standard cosymplectic structure $(\omega,\eta)$  on $M =  T^*\R^{N}\times \mathbb R \simeq \mathbb R^{2N+1}$ given in Example \ref{Ex:N-Albert}.

The expression of the Hamiltonian vector field and the evolution vector field are  

\begin{equation*}
    X^{(\omega,\eta)}_H=\frac{1}{m} \sum_{i=1}^N p_i \frac{\partial}{\partial q^i} - m \Omega^2 \bigg( v t \frac{\partial}{\partial p_1} + \sum_{i=1}^N q^i \frac{\partial}{\partial p_i}\bigg)
\end{equation*}
and 
\begin{equation}\label{E}
    E^{(\omega,\eta)}_H=\frac{1}{m} \sum_{i=1}^N p_i \frac{\partial}{\partial q^i} - m \Omega^2 \bigg( v t \frac{\partial}{\partial p_1} + \sum_{i=1}^N q^i \frac{\partial}{\partial p_i}\bigg) + \frac{\partial}{\partial t}. 
\end{equation}

For this system, one has the following symmetry   $\phi: \R \times M \to M$,  defined  by 
\begin{equation*}
    \phi (s, q^i,p_i,t) = (q^1-v s,q^\alpha,p_i,t + s) .
\end{equation*}

Now, we want to apply Reeb  reduction to the dynamical system $(T^*\R^N\times \R, E^{(\omega,\eta)}_H)$ for the symmetry $\phi$. In order to do this, we modify the standard cosymplectic structure $(\omega,\eta)$ (see Example \ref{Ex:N-Albert}) following   Proposition \ref{cc}. The new one  is given by 
\begin{equation}\label{w}
    \omega_H = \omega + dH \wedge \eta = \sum_{i=1}^N d q^i \wedge d p_i + \frac{1}{m} \sum_{i=1}^N p_i d p_i \wedge d t + m \Omega^2 (\sum_{i=1}^N q^i d q^i \wedge dt + v t d q^1 \wedge d t),\;\;\; \eta=dt.
\end{equation}
The vector field $E^{(\omega,\eta)}_H$ is the Reeb vector field of this new cosymplectic structure. 

 On the other hand, since  $H$  and the cosymplectic structure $(\omega,\eta)$ are invariant with respect to $\phi$, then this action is cosymplectic with respect to $(\omega_H,\eta)$. 
 
 In this case, for all $\xi\in \R$, the fundamental vector field $\xi_M$ associated to this action 
satisfies  $\eta (\xi_M) = \xi$ and thus  Albert's condition doesn't work (see Example \ref{Ex:N-Albert}).
Integrating the fundamental vector field $1_M$ associated with $1\in \R,$ we get
\begin{equation}
    \begin{split}
        q^1(s) &= -v s + q^1 , \\
        q^\alpha (s) &= q^\alpha , \\
        p_i(s) &= p_i, \\
        t(s) &= s + t .
    \end{split}
    \label{eq:}
\end{equation}
A momentum map $J\colon  T^*\R^N\times \R\to \R $ for $(\omega, \eta)$ satisfies that $i_{\xi_M} \omega = - \xi v d p_1 = d J_\xi$, so we can take  $J_\xi(q^i,p_i,t) = - \xi v p_1$.

For the cosymplectic structure $(\omega _H,\eta)$, we have that the following map $J_H: T^*\R^N\times \R\to \R$ given by 
\begin{equation}
\label{eq:J_Ha}
        J_H
        =J-H=- v p_1 -  \displaystyle\frac{1}{2m} \bigg( (p_1 + m v)^2 + \displaystyle\sum_{\alpha=1}^N p_\alpha^2 \bigg) - \displaystyle\frac{1}{2} m \Omega^2 \bigg( (q^1+v t)^2 + \displaystyle\sum_{\alpha=2}^N (q^\alpha)^2 \bigg)    
\end{equation}
is an equivariant momentum map. In fact,
\[
i_{\xi _M}\omega _H = d ( \xi (J_1 - H)) = d (J_H)_\xi, \mbox{ for }\xi \in \R.
\]
Moreover, since $E_H^{(\omega ,\eta )}$ is the Reeb vector field of $(\omega _H,\eta )$, it is clear that
\[
E^{(\omega ,\eta)}_H ((J_H)_\xi )=d(J_H)_\xi (E^{(\omega ,\eta)}_H)= (i_{\xi _M}\omega _H )(E^{(\omega ,\eta)}_H)=0.
\]
So, $(J_H)_\xi$ is a first integral of $E^{(\omega ,\eta)}_H$. However, 
\[
{X}_H^{(\omega,\eta)}(J_H)=m\Omega^2v(q^1+ vt)=-{\mathcal R}(J_H),
\]
where ${\mathcal R}=\displaystyle\frac{\partial}{\partial t}$ is the Reeb vector field of $(\omega,\eta).$
}
\end{example}

Summarizing, Albert's reduction Theorem  has several limitations: 
\begin{enumerate}
\item 
There exist Hamiltonian cosymplectic actions for invariant time-dependent Hamiltonian systems
which do not satisfy condition  (\ref{cA}). However,  this condition is necessary in order to obtain a reduced cosymplectic structure. This fact suggests that, if we want to reduce these other systems, the  
cosymplectic reduction is not  the right framework. 
\item The appropriate modification of the action (and of the momentum map)  to obtain the conditions of Albert is not useful to reduce the Reeb dynamics  because we need to know the flow of the Reeb vector field. 
\item 
The conditions in Albert's Theorem imply that the functions $J_\xi$ are first integrals of  the Reeb and Hamiltonian vector fields. But, for our purpose, it would be enough that they were only first integrals of 
the evolution vector field of the Hamiltonian function. 
\end{enumerate}

\section{Reduction of mechanical presymplectic manifolds}\label{section4}

In this  section, we will see how presymplectic structures of corank 1 with parallelizable characteristic foliation provide an appropriate setting for overcoming the previous limitations.

\begin{definition}
Let $M$ be a manifold of dimension $(2n+1)$. 
A mechanical presymplectic structure on $M$ is a couple $(\omega, {\mathcal R})$, where $\omega$ is a closed 2-form of corank 1 and ${\mathcal R}$ is a vector field which generates the characteristic foliation of $\omega$, that is, 
\[ 
\ker\omega=\langle {\mathcal R}\rangle.
\]
${\mathcal R}$ is called the Reeb vector field of the mechanical presymplectic structure. The triple $(M,\omega, \mathcal{R})$ is called a mechanical presymplectic manifold. 
\end{definition}
It is clear that cosymplectic structures on a manifold $M$ of odd dimension are mechanical presymplectic structures on $M$. However, there exist mechanical presymplectic structures on manifolds of odd dimension which do not admit cosymplectic structures as we will see below. 

First of all, we remark that the Betti numbers of a compact cosymplectic manifold are all non-zero. In fact, let $(\omega ,\eta)$ be a cosymplectic structure on a compact manifold $M$ of dimension $2n+1$. Then, using Stoke's Theorem and the fact that $\eta \wedge \omega ^n$ is a volume form, one may prove that 
\[
0\neq [\eta\wedge \omega^k]\in H^{2k+1}_{DR}(M), \qquad 0\neq [\omega ^k]\in H^{2k}_{DR}(M),\mbox{ for } k=0,\ldots , n,
\]
where $H^*_{DR}(M)$ is the De Rham cohomology of $M$.

Secondly,  two cosymplectic structures $(\omega,\eta)$ and $(\omega',\eta')$ on a manifold $M$ induce the same mechanical presymplectic structure if and only if $\omega=\omega'$, both Reeb vector fields, ${\mathcal R}$ and  ${\mathcal R}',$ coincide  and $\eta'=\eta + \gamma$, with $\gamma$ a closed $1$-form on $M$ which belongs to the annihilator  $\langle{\mathcal R}\rangle^0$ of the Reeb vector field ${\mathcal R}$ of both cosymplectic structures. In fact, if $(\omega,\eta)$ and $(\omega',\eta')$ induce the same mechanical presymplectic structure then $\omega=\omega'$ and the Reeb vector fields of  both cosymplectic structures coincide. Moreover, $\gamma= \eta-\eta'$ is a closed $1$-form and $i_{\mathcal R}\gamma=1-1=0.$ Conversely, if $(\omega,\eta)$ is a cosymplectic structure on $M$ with Reeb vector field ${\mathcal R}$ and $\gamma\in \Omega^1(M)$ is a 
closed $1$-form belonging  to $\langle{\mathcal R}\rangle^0,$ then $(\omega,\eta+\gamma)$ is a cosymplectic structure with Reeb vector field ${\mathcal R}$, that is, it induces the same mechanical presymplectic structure $(\omega,{\mathcal R})$ that the one of $(\omega,\eta)$ on $M$.

On the other hand, a contact structure on a manifold $M$ of odd dimension $2n+1$ is a 1-form $\Theta$ on $M$ such that $\Theta \wedge (d\Theta )^n$ is a volume form on $M$. If $\Theta$ is a contact structure then there exists a unique vector field $\mathcal{R}$ on $M$ such that
\[
i_{\mathcal{R}}\Theta =1, \qquad i_{\mathcal{R}}d\Theta =0,
\]
(for more details, see \cite{AM, Ar, LM}). It is clear that the couple $(d\Theta ,\mathcal{R})$ is a mechanical presymplectic structure. So, from a contact structure on $M$, one may define a mechanical presymplectic  structure.

Now, a typical example of a compact contact manifold is the sphere $S^{2n+1}$ of odd dimension $2n+1$ ($n\geq 1$). In fact, if $\lambda$ is the Liouville 1-form of $T^*\R ^{n+1}$
\[
\lambda = \sum _{i=1}^{n+1} p_i dq^i 
\]
with $(q^1,\ldots ,q^{n+1},p_1,\ldots ,p_{n+1})$ the canonical coordinates on $T^*\R^{n+1}$ and $i\colon S^{2n+1}\to T^*\R^{n+1}$ is the canonical inclusion then, it is well known that $\Theta =i^* \lambda$ is a contact structure on $S^{2n+1}$ (see, for instance, \cite{Bl}). Thus, $S^{2n+1}$ admits mechanical presymplectic structures. However, the Betti numbers of $S^{2n+1}$ satisfy 
\[ 
b_k(S^{2n+1})=0, \mbox{ for }k \in \{ 1,\ldots , 2n\}.
\]
Therefore, $S^{2n+1}$ doesn't admit cosymplectic structures.

\begin{remark}
Cosymplectic and contact structures  are examples of stable Hamiltonian structures \cite{HoZe} (see also \cite{Ac}). A stable Hamiltonian structure on a manifold $M$ of odd dimension $2n+1$ is a couple $(\omega,\lambda)$, where $\omega$ is a closed $2$-form, $\lambda$ is a  $1$-form, $\lambda\wedge \omega^n$ is a volume form and the Reeb vector field of $(\omega,\lambda)$ satisfies $i_{\mathcal R}d\lambda=0$. Then, it is clear that a stable Hamiltonian structure defines a mechanical presymplectic structure. However, for our interests, we don't need to fix the $1$-form $\lambda.$ So, we will deal with mechanical presymplectic structures. We also remark that coisotropic reduction for stable Hamiltonian structures was developed in \cite{LeIz}. 
\end{remark}

In what follows,  we will give a Marsden-Weinstein reduction theorem for mechanical presymplectic structures. Firstly, we will define the kind of symmetries considered. 
\begin{definition}\label{D}
A symmetry for a mechanical presymplectic structure $(\omega ,\mathcal{R})$ on a manifold $M$ is an action $\phi:G\times M\to M$ of a Lie group $G$ on $M$ such that 
 \begin{enumerate}
 \item[(i)] $\phi_g^*(\omega)=\omega$ and $T\phi_g({\mathcal R})={\mathcal R}\circ \phi_g$, for all $g\in G.$
 \item[(ii)] ${\mathcal R}(x)\notin T_x(G\cdot x),$ for all $x\in M.$ 
 \end{enumerate}
 \end{definition}
 
\begin{remark}
Condition (i) in Definition \ref{D} is natural, since it is just the invariance of the mechanical presymplectic structure under the action of the symmetry Lie group. 

On the other hand, condition (ii) is also natural. In fact, below we will use the symmetry to perform reduction of the mechanical presymplectic structure (see Theorem \ref{thpr}). However, if there were a point $x\in M$ such that $\mathcal{R} (x)\in T_x(G\cdot x)$ then the projection of $\mathcal{R}(x)$ to an appropriate orbit space would be zero and, therefore, such a projection could not be the Reeb vector field at the corresponding point of the reduced space.
\end{remark}

Note that the previous condition (i) implies that 
\begin{equation}\label{J}
 {\mathcal L}_{\xi_M}\omega=0
\end{equation}
 and therefore,  $i_{\xi_M}\omega$ is a closed $1$-form. This fact motivates the following notion.
\begin{definition}
A symmetry $\phi:G\times M\to M$ on a  mechanical presymplectic manifold  $(M,\omega,{\mathcal R})$   is a Hamiltonian presymplectic action  if there is a smooth map $J\colon M\to  {\mathfrak g}^*$ such that 
  \begin{equation}\label{Je2}i_{\xi_M}\omega=dJ_\xi, \mbox{ for all }\xi \in {\mathfrak g}.\end{equation} 
\end{definition} 
This kind of actions is a particular case of strongly presymplectic or Hamiltonian actions defined in \cite[Definition 1]{EMR2}.

As a consequence of Definition \ref{D}, we obtain a version of Noether theorem in this setting.
\begin{corollary}\label{cor:4.3'}
 Let $\phi:G\times M\to M$ be a Hamiltonian action for a mechanical presymplectic structure  $(\omega,{\mathcal R})$ on $M$ with momentum map $J:M\to {\mathfrak g}^*$. Then, for each $\xi \in \mathfrak{g}$, the smooth function $J_\xi$ is a first integral of the dynamical Reeb vector field $\mathcal{R}$.
\end{corollary}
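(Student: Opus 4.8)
The plan is to prove this by a direct one-line computation, exploiting the defining feature of a mechanical presymplectic structure, namely that the Reeb vector field $\mathcal{R}$ spans the kernel of $\omega$, so that $i_{\mathcal{R}}\omega = 0$. Unlike the cosymplectic setting, where the analogous Noether statement was obtained via $i_{\xi_M}i_{\mathcal{R}}\omega=0$, here we do not need any $1$-form $\eta$ at all; the entire argument rests on the corank-$1$ kernel condition $\ker\omega=\langle\mathcal{R}\rangle$ together with the momentum map identity \eqref{Je2}.

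Concretely, I would fix $\xi\in\mathfrak{g}$ and evaluate the derivative of $J_\xi$ along $\mathcal{R}$ by unwinding the definitions. First I would write $\mathcal{R}(J_\xi)=dJ_\xi(\mathcal{R})$. Next I would substitute the momentum map relation $dJ_\xi=i_{\xi_M}\omega$ from \eqref{Je2}, which turns this into $\omega(\xi_M,\mathcal{R})$. Using the antisymmetry of $\omega$ and then the Reeb condition $i_{\mathcal{R}}\omega=0$ finishes the argument:
\[
\mathcal{R}(J_\xi)=dJ_\xi(\mathcal{R})=(i_{\xi_M}\omega)(\mathcal{R})=\omega(\xi_M,\mathcal{R})=-\omega(\mathcal{R},\xi_M)=-(i_{\mathcal{R}}\omega)(\xi_M)=0.
\]
Hence $J_\xi$ is constant along the flow of $\mathcal{R}$, i.e. it is a first integral of the dynamical Reeb vector field.

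There is essentially no obstacle here: the result is an immediate corollary of the kernel condition defining a mechanical presymplectic structure, which is precisely why I list it as a corollary rather than a theorem. The only point worth emphasizing in the writeup is conceptual rather than technical: this conservation law requires \emph{none} of the extra hypotheses of Albert's framework (in particular no analogue of the condition $\eta(\xi_M)=0$), since it follows solely from $i_{\mathcal{R}}\omega=0$ and the existence of the momentum map. This is exactly the flexibility sought in the introduction, where the goal was a theory in which the components of the momentum map are first integrals of the evolution dynamics without being forced to be first integrals of a separately chosen splitting.
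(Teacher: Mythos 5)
Your proposal is correct and coincides with the paper's own proof: both amount to the single computation $\mathcal{R}(J_\xi)=(i_{\xi_M}\omega)(\mathcal{R})=-(i_{\mathcal{R}}\omega)(\xi_M)=0$, using the momentum map identity \eqref{Je2} together with $\ker\omega=\langle\mathcal{R}\rangle$. Your version merely spells out the antisymmetry step explicitly, which the paper leaves implicit.
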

\begin{proof}
Using Definition \ref{D}, it follows that 
\[
dJ_\xi (\mathcal{R})=(i_{\xi_M}\omega )(\mathcal{R})=-(i_\mathcal{R}\omega )(\xi _M)=0.
\]
\end{proof}
Next, we will prove that for Hamiltonian presymplectic actions which are infinitesimally free, the momentum map is a submersion. 
\begin{proposition}
Let $\phi:G\times M\to M$ be a Hamiltonian presymplectic action of a Lie group $G$ on the mechanical presymplectic manifold  $(M,\omega,{\mathcal R})$  with momentum map $J:M\to {\mathfrak g}^*$. Suppose that $\phi$ is infinitesimally free, i.e.
$$\xi_M=0 \mbox{ if and only if } \xi=0$$
for $\xi\in{\mathfrak g}.$ Then $J$ is a submersion and therefore, for all $\mu\in {\mathfrak g}^*$, $J^{-1}(\mu)$ is a submanifold of dimension $\dim M-\dim G.$ 
 \end{proposition}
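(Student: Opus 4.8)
The plan is to prove that $J$ is a submersion by showing that the tangent map $T_xJ\colon T_xM\to \mathfrak{g}^*$ (where I identify the tangent space to the vector space $\mathfrak{g}^*$ with $\mathfrak{g}^*$ itself) is surjective at every point $x\in M$. Once this is established, every $\mu\in\mathfrak{g}^*$ is automatically a regular value, and the preimage (regular value) theorem yields that $J^{-1}(\mu)$ is an embedded submanifold of $M$ of codimension $\dim\mathfrak{g}^*=\dim G$, hence of dimension $\dim M-\dim G$, as claimed.

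First I would compute the differential of $J$ directly from the defining equation \eqref{Je2}. For $v\in T_xM$ and $\xi\in\mathfrak{g}$ one has
\[
\langle T_xJ(v),\xi\rangle = dJ_\xi(v)=(i_{\xi_M}\omega)(v)=\omega(\xi_M(x),v).
\]
Surjectivity of the linear map $T_xJ$ onto $\mathfrak{g}^*$ is equivalent, by duality in finite dimensions (the transpose of a surjection is injective), to the statement that the only $\xi\in\mathfrak{g}$ satisfying $\langle T_xJ(v),\xi\rangle=0$ for all $v\in T_xM$ is $\xi=0$. By the displayed formula, such a $\xi$ is characterized by $\omega(\xi_M(x),v)=0$ for every $v\in T_xM$, that is, by $\xi_M(x)\in\ker\omega_x=\langle{\mathcal R}(x)\rangle$. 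Hence it suffices to show that $\xi_M(x)\in\langle{\mathcal R}(x)\rangle$ forces $\xi=0$.

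The decisive step is then to combine the two remaining hypotheses. Writing $\xi_M(x)=c\,{\mathcal R}(x)$ for some $c\in\R$, I note that $\xi_M(x)$ is by construction tangent to the orbit, $\xi_M(x)\in T_x(G\cdot x)$, whereas condition (ii) of Definition \ref{D} guarantees ${\mathcal R}(x)\notin T_x(G\cdot x)$. If $c\neq 0$ this would give ${\mathcal R}(x)=c^{-1}\xi_M(x)\in T_x(G\cdot x)$, a contradiction; therefore $c=0$ and $\xi_M(x)=0$. Infinitesimal freeness of $\phi$ then yields $\xi=0$, completing the proof that $T_xJ$ is surjective. The main point to get right is precisely this interplay: the corank-one condition on $\omega$ makes $\ker\omega_x=\langle{\mathcal R}(x)\rangle$ one-dimensional, so the Reeb direction is the \emph{only} possible obstruction to surjectivity; condition (ii) rules out that this direction lies along an orbit, and infinitesimal freeness then forces the generator to vanish. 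Neither hypothesis alone suffices, and it is their conjunction with the corank-one structure that delivers the result.
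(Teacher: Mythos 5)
Your proof is correct and follows essentially the same route as the paper's: both reduce surjectivity of $T_xJ$ to injectivity of its transpose, use \eqref{Je2} to place $\xi_M(x)$ in $\ker\omega(x)=\langle {\mathcal R}(x)\rangle$, invoke condition (ii) of Definition \ref{D} to kill the Reeb component, and conclude $\xi=0$ by infinitesimal freeness. The only cosmetic difference is that the paper phrases the duality step via the dual map $T^*_xJ\colon \mathfrak{g}\to T^*_xM$, while you state the equivalent annihilator condition directly.
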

\begin{proof}
We will prove that for all $x\in M,$ 
the dual map 
$T^*_xJ:T^*_{J(x)}{\mathfrak g }^*\cong {\mathfrak g }\to T^*_xM$ of $T_xJ\colon T_xM\to T_{J(x)}\mathfrak{g}^*\cong\mathfrak{g}^*$ is injective. In fact, 
if $\xi\in \ker T^*_xJ$ then 
$$0=T^*_xJ(\xi)=dJ_\xi(x).$$
Using (\ref{Je2}), we have that ${\xi_M(x)}\in \ker \omega(x)$, that is, there is a real value $\lambda_\xi^x\in {\mathbb R}$ such that $\xi_M(x)=\lambda_\xi^x{\mathcal R}(x).$ From condition (ii) of Definition \ref{D}, we deduce that $\lambda_\xi^x=0$ and therefore $\xi_M(x)=0,$ for all $x\in M.$ Since $\phi$ is infinitesimally free, then $\xi=0$. Thus, $\ker T^*_xJ=\{0\}.$
 \end{proof}
 In the rest of the section, we will prove a redution theorem for a Hamiltonian presymplectic action. For this purpose, we will use an algebraic result (Lemma 
\ref{Lema:4.5}), whose proof is contained in the appendix, and a description of the induced structure on the level sets of the momentum map (see Proposition \ref{prop:4.6} below).
 
\begin{lemma}\label{Lema:4.5}
 Let $V$ be a vector space of dimension $2n+1$, $\omega$ a $2$-form on $V$ and $R\in V$ such that
\[
\ker\omega =\langle R\rangle.
\]
Suppose that $A$ is a subspace of $V$  and $A^\perp$ denotes the subspace of $V$ 
 $$A^\perp=\{v\in V/\omega(v,a)=0,\mbox{ for all }a\in A\}.$$ 
We have these two alternatives 
 \begin{enumerate}
 \item[(i)] If $R\notin A$ then  $\dim A^\perp=\dim V-\dim A $
 \item[(ii)] If $R\in A$ then $\dim A^\perp=\dim V-\dim A+1$
 \end{enumerate}
 As a consequence $(A^\perp)^\perp=A \oplus \langle R\rangle$ if $R\notin A$  and $(A^\perp)^\perp=A$ if $R\in A.$ 
\end{lemma}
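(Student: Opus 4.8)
The plan is to linearize the whole statement through the musical map $\flat_\omega\colon V\to V^*$ defined by $\flat_\omega(v)=i_v\omega=\omega(v,\cdot)$, and to translate $\omega$-orthogonality into annihilators. First I would record that $\ker\flat_\omega=\ker\omega=\langle R\rangle$, so by rank--nullity $\flat_\omega$ has rank $2n$. Since $\omega(v,R)=0$ for every $v\in V$ (because $R\in\ker\omega$), every $\flat_\omega(v)$ kills $R$, hence $\operatorname{Im}\flat_\omega\subseteq\langle R\rangle^0$; as both spaces have dimension $2n$, this gives the key identification $\operatorname{Im}\flat_\omega=\langle R\rangle^0$, where $\langle R\rangle^0\subset V^*$ is the annihilator of $R$.

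The second observation is that a vector $v$ lies in $A^\perp$ exactly when $\flat_\omega(v)$ annihilates $A$, i.e. $A^\perp=\flat_\omega^{-1}(A^0)$, with $A^0\subset V^*$ the annihilator of $A$. Applying the standard preimage dimension formula gives
\[
\dim A^\perp=\dim\ker\flat_\omega+\dim\bigl(A^0\cap\operatorname{Im}\flat_\omega\bigr)=1+\dim\bigl(A^0\cap\langle R\rangle^0\bigr).
\]
Now I would use $A^0\cap\langle R\rangle^0=(A+\langle R\rangle)^0$, so that $\dim(A^0\cap\langle R\rangle^0)=(2n+1)-\dim(A+\langle R\rangle)$. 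The case split of the lemma is then immediate: if $R\notin A$ then $\dim(A+\langle R\rangle)=\dim A+1$, giving $\dim A^\perp=2n+1-\dim A=\dim V-\dim A$; if $R\in A$ then $A+\langle R\rangle=A$, giving $\dim A^\perp=2n+2-\dim A=\dim V-\dim A+1$.

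For the consequence I would first note two inclusions that hold for every $A$: from antisymmetry of $\omega$ one has $A\subseteq(A^\perp)^\perp$, and since $R\in\ker\omega$ one has both $R\in A^\perp$ and $R\in(A^\perp)^\perp$ automatically, so in particular $A+\langle R\rangle\subseteq(A^\perp)^\perp$. The crucial point is that $R\in A^\perp$ \emph{always}, so when I apply the dimension formula to the subspace $A^\perp$ I am always in alternative (ii): $\dim(A^\perp)^\perp=\dim V-\dim A^\perp+1$. Plugging in the value of $\dim A^\perp$ from the two cases yields $\dim(A^\perp)^\perp=\dim A+1$ when $R\notin A$ and $\dim(A^\perp)^\perp=\dim A$ when $R\in A$. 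Comparing with the inclusion $A+\langle R\rangle\subseteq(A^\perp)^\perp$ — a direct sum $A\oplus\langle R\rangle$ of dimension $\dim A+1$ in the first case, and equal to $A$ in the second — forces equality in both cases.

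I expect the only genuinely delicate step to be the bookkeeping in the double orthogonal: one must remember that $R$ belongs to $A^\perp$ regardless of whether $R\in A$, so the second application of the dimension formula is governed by alternative (ii) even when the first application used alternative (i). Everything else is routine linear algebra once the identification $\operatorname{Im}\flat_\omega=\langle R\rangle^0$ and the translation $A^\perp=\flat_\omega^{-1}(A^0)$ are in place.
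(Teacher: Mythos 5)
Your proof is correct and follows essentially the same route as the paper's: both linearize via the musical map $\flat(v)=i_v\omega$, reduce $\dim A^\perp$ to $1+\dim\bigl(A^{0}\cap \flat(V)\bigr)$, and settle the double orthogonal by combining the inclusion $A+\langle R\rangle\subseteq (A^\perp)^\perp$ with a dimension count that exploits the fact that $R\in A^\perp$ always, so the second application of the formula is governed by alternative (ii). Your identification $\flat(V)=\langle R\rangle^{0}$ together with $A^{0}\cap\langle R\rangle^{0}=(A+\langle R\rangle)^{0}$ is only a mild streamlining of the paper's equivalent case analysis (whether $A^{0}\subset\flat(V)$, which it shows holds precisely when $R\in A$); the substance of the two arguments is identical.
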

  
 \begin{proposition}\label{prop:4.6}
 Let $\phi:G\times M\to M$ be a Hamiltonian presymplectic action of a Lie group $G$ for a mechanical presymplectic structure  $(\omega,{\mathcal R})$  on $M$ with momentum map $J:M\to {\mathfrak g}^*.$ Suppose that $\phi$ is infinitesimally free. Let  $\mu\in \g^*$ with $J^{-1}(\mu)\not=\emptyset.$ 
 \begin{enumerate}
 \item[(i)] Then, 
 $$(T_x(J^{-1}(\mu)))^\perp:=\{ v\in T_xM/\omega(x)(v,u)=0 \mbox{ for all }u\in T_x(J^{-1}(\mu))\}=T_x(G\cdot x)\oplus \langle {\mathcal R}(x)\rangle,$$
 for all $x\in J^{-1}(\mu)$. 
 
 \item[(ii)] Suppose that $J$ is  $G$-equivariant with respect to the action $\phi$ and the coadjoint representation $Ad^*:G\times {\mathfrak g}^*\to {\mathfrak g}^*,$ i.e. 
\[
J\circ \phi_g=Ad^*_g\circ J \mbox{ for all } g\in G.
\] 
Then, 
\begin{enumerate}
\item[(a)] $\phi$ induces an  infinitesimally free action  of the isotropy subgroup $G_\mu$  on $J^{-1}(\mu).$
\item[(b)] If $\iota_\mu:J^{-1}(\mu)\to M$ is the canonical inclusion and $x\in J^{-1}(\mu),$ it follows that 
\[
\ker[(\iota_\mu^*\omega)(x)]=(T_x(J^{-1}(\mu))\cap T_x(J^{-1}(\mu)))^\perp=T_x(G_\mu\cdot x)\oplus  \langle {\mathcal R}(x)\rangle
 \]
 \end{enumerate}
 \end{enumerate}
 \end{proposition}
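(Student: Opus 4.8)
The plan is to reduce both parts to the linear-algebraic Lemma \ref{Lema:4.5}, applied at a fixed point $x\in J^{-1}(\mu)$, after first identifying the tangent space to the level set as a symplectic orthogonal.

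For part (i), I would first invoke the preceding proposition: since $\phi$ is infinitesimally free, $J$ is a submersion, so $T_x(J^{-1}(\mu))=\ker T_xJ$. The defining relation \eqref{Je2} gives, for every $v\in T_xM$ and $\xi\in\mathfrak{g}$, the identity $\langle T_xJ(v),\xi\rangle = dJ_\xi(x)(v)=\omega(x)(\xi_M(x),v)$. Hence $v\in\ker T_xJ$ precisely when $\omega(x)(\xi_M(x),v)=0$ for all $\xi$, i.e.\ $T_x(J^{-1}(\mu))=(T_x(G\cdot x))^\perp$. Taking $A=T_x(G\cdot x)$ in Lemma \ref{Lema:4.5}, and using that $\mathcal{R}(x)\notin A$ by condition (ii) of Definition \ref{D}, the consequence $(A^\perp)^\perp=A\oplus\langle\mathcal{R}(x)\rangle$ yields exactly $(T_x(J^{-1}(\mu)))^\perp=T_x(G\cdot x)\oplus\langle\mathcal{R}(x)\rangle$.

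For part (ii)(a), equivariance $J\circ\phi_g=Ad^*_g\circ J$ shows that for $g\in G_\mu$ and $x\in J^{-1}(\mu)$ one has $J(\phi_g(x))=Ad^*_g\mu=\mu$, so $\phi_g$ preserves $J^{-1}(\mu)$ and $G_\mu$ acts on it; since the infinitesimal generators of this restricted action are the restrictions of the $\xi_M$ with $\xi\in\mathfrak{g}_\mu$, infinitesimal freeness is inherited directly from $\phi$. For part (ii)(b), I would note that the kernel of the pulled-back form is its radical, $\ker[(\iota_\mu^*\omega)(x)]=T_x(J^{-1}(\mu))\cap(T_x(J^{-1}(\mu)))^\perp$, which by part (i) equals $T_x(J^{-1}(\mu))\cap(T_x(G\cdot x)\oplus\langle\mathcal{R}(x)\rangle)$. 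Here $\mathcal{R}(x)\in T_x(J^{-1}(\mu))$ because $i_{\mathcal{R}}\omega=0$ places $\mathcal{R}(x)$ in $(T_x(G\cdot x))^\perp=T_x(J^{-1}(\mu))$, which is the infinitesimal content of Corollary \ref{cor:4.3'}; so the $\langle\mathcal{R}(x)\rangle$ summand survives the intersection, and it remains to compute $T_x(G\cdot x)\cap T_x(J^{-1}(\mu))$.

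The crux, and the step I expect to require the most care, is this last intersection: differentiating the equivariance relation at the identity gives $T_xJ(\xi_M(x))=\mathrm{ad}^*_\xi\mu$ (up to sign), so $\xi_M(x)\in\ker T_xJ=T_x(J^{-1}(\mu))$ if and only if $\mathrm{ad}^*_\xi\mu=0$, i.e.\ $\xi\in\mathfrak{g}_\mu$; hence $T_x(G\cdot x)\cap T_x(J^{-1}(\mu))=T_x(G_\mu\cdot x)$. Combining this with the previous remark identifies the radical with $T_x(G_\mu\cdot x)\oplus\langle\mathcal{R}(x)\rangle$, the sum being direct since $\mathcal{R}(x)\notin T_x(G\cdot x)\supseteq T_x(G_\mu\cdot x)$ by Definition \ref{D}(ii). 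One must be slightly careful that an element of the intersection, written a priori as $\xi_M(x)+c\,\mathcal{R}(x)$, really has its orbit part lying in $\ker T_xJ$ before applying the equivariance computation; this follows because $\mathcal{R}(x)$ itself already lies in $T_x(J^{-1}(\mu))$, so the two components can be separated first.
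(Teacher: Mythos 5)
Your proposal is correct and follows essentially the same route as the paper's own proof: identify $T_x(J^{-1}(\mu))=(T_x(G\cdot x))^\perp$ via \eqref{Je2}, apply Lemma \ref{Lema:4.5} with $A=T_x(G\cdot x)$ and $\mathcal{R}(x)\notin A$ for part (i), and use equivariance both to restrict the action to $G_\mu$ and to show $T_x(G\cdot x)\cap\ker T_xJ=T_x(G_\mu\cdot x)$ for part (ii). Your extra remark justifying the separation of the $\xi_M(x)$ and $\mathcal{R}(x)$ components in the intersection is precisely the modular-law step the paper uses implicitly when it invokes $\mathcal{R}(x)\in T_x(J^{-1}(\mu))$.
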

 \begin{proof} 
 (i) Firstly, we will prove that 
 \begin{equation}\label{J-1}
 T_x(J^{-1}(\mu))=(T_x(G\cdot x))^\perp =\{ u\in T_xM \, | \, \omega (x) (u ,\xi _M(x))=0, \mbox{ for }\xi \in \mathfrak{g} \},
 \end{equation}
 for all $x\in J^{-1}(\mu).$ In fact, using (\ref{Je2}), we deduce that $u\in T_x(J^{-1}(\mu))=\ker T_xJ$ if and only if
\[
0= \langle (T_xJ)(u), \xi \rangle = \langle dJ_\xi (x), u\rangle = \omega(x)(\xi_M(x),u),\mbox{ for }\xi\in {\mathfrak g}.
\]
So,  \eqref{J-1} holds.
 
Now, we apply the last part of the previous lemma 
\[ 
(T_x(J^{-1}(\mu)))^\perp=((T_x(G\cdot x))^\perp)^\perp=T_x(G\cdot x)\oplus \langle {\mathcal R}(x)\rangle.
\]
Here, we have used that ${\mathcal R}(x)\notin T_x(G\cdot x).$ 
 
(ii) If $g\in G_\mu$ then, from the equivariance of $J$, we have that, for all $x\in J^{-1}(\mu),$ 
$$J\circ \phi_g(x)=Ad^*_g(J(x))=Ad^*_g(\mu)=\mu.$$
Therefore, $\phi$ induces an infinitesimally free action of the isotropy subgroup $G_\mu$  on $J^{-1}(\mu)$. Moreover,  applying (i), we have that, for all $x\in J^{-1}(\mu),$ 
\[
\ker[(\iota_\mu ^*\omega)(x)]=\left ( T_x(J^{-1}(\mu)\right )\cap \left ( T_x(J^{-1}(\mu))\right )^\perp=\left ( T_x(J^{-1}(\mu)\right )\cap (T_x(G\cdot x)\oplus  \langle {\mathcal R}(x)\rangle)=T_x(G_\mu\cdot x)\oplus  \langle {\mathcal R}(x)\rangle.
\]
In the last equality, we have used that $\mathcal{R}(x)\in T_x(J^{-1}(\mu ))$ (see Corollary \ref{cor:4.3'}) and the fact that 
\[
T_x(J^{-1}(\mu ))\cap T_x(G\cdot x)=T_x(G_\mu \cdot x).
\]
Indeed, if $\xi \in \mathfrak{g}$ then, since $J$ is $G$-equivariant, it follows that 
\[
(T_xJ)(\xi _M(x))=\xi _{\mathfrak{g}^*} (J(x))=\xi _{\mathfrak{g}^*} (\mu)
\]
and, consequently,
\[
\xi _M(x)\in T_x(J^{-1}(\mu ))=\ker T_xJ \iff \xi _{\mathfrak{g}^*}(\mu )=0 \iff \xi \in \mathfrak{g}_\mu \iff \xi _M(x)\in T_x(G_\mu \cdot x).
\]
\end{proof}
As we mentioned before, the previous results allow to prove a reduction theorem for a Hamiltonian presymplectic action on a manifold endowed with a mechanical presymplectic structure. 
 
\begin{theorem}\label{thpr}
 Let $\phi:G\times M\to M$ be a Hamiltonian presymplectic action of a Lie group $G$ for a mechanical presymplectic structure 
 $(\omega,{\mathcal R})$ on $M$ with $G$-equivariant momentum map $J:M\to {\mathfrak g}^*$. Moreover, let $\mu$ be  an element of ${\mathfrak g}^*$ such that $J^{-1}(\mu)\not=\emptyset$ and suppose  that $\phi$ is infinitesimally free and the space of orbits $M_\mu=J^{-1}(\mu)/G_\mu$ is a manifold such that the canonical projection  $\pi_\mu:J^{-1}(\mu)\to M_\mu$ is a submersion. Then
 \begin{enumerate}
 \item[(i)] There exists a unique closed $2$-form $\omega_\mu$ on $M_\mu$ such that 
 \begin{equation}\label{w1}
 \pi_\mu^*(\omega_\mu)=\iota_\mu^*\omega,
 \end{equation}
 where $\iota_\mu:J^{-1}(\mu)\to T^*M$ is the inclusion map. 
 
\item[(ii)] The vector field ${\mathcal R}$ restricts to $J^{-1}(\mu)$ and its restriction is $\pi_\mu$-projectable. Its projection is a vector field ${\mathcal R}_\mu$ such that 
$$\ker\omega_\mu=\langle {\mathcal R}_\mu\rangle.$$
Thus, $(\omega_\mu,{\mathcal R}_\mu)$ is a mechanical presymplectic structure on $M_\mu=J^{-1}(\mu )/G_\mu$. 
 \end{enumerate}
 \end{theorem}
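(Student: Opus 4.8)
The plan is to reduce everything to two facts already available: a form on $J^{-1}(\mu)$ descends to $M_\mu$ precisely when it is $G_\mu$-invariant and horizontal for $\pi_\mu$, and the pointwise kernel computation of Proposition \ref{prop:4.6}. For (i), I would first show that $\iota_\mu^*\omega$ is $\pi_\mu$-basic. Invariance under $G_\mu$ is immediate, since $\omega$ is $G$-invariant by condition (i) of Definition \ref{D} and $\iota_\mu$ intertwines the $G_\mu$-action on $J^{-1}(\mu)$ with the action on $M$ (Proposition \ref{prop:4.6}(ii)(a)), so that $(\phi_g^{J^{-1}(\mu)})^*\iota_\mu^*\omega=\iota_\mu^*\phi_g^*\omega=\iota_\mu^*\omega$ for $g\in G_\mu$. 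Horizontality amounts to $i_{\xi_{J^{-1}(\mu)}}(\iota_\mu^*\omega)=0$ for $\xi\in\g_\mu$; but the fibers of $\pi_\mu$ are the $G_\mu$-orbits, so the vertical vectors are exactly the $\xi_M(x)$ with $\xi\in\g_\mu$, and the identity $T_x(J^{-1}(\mu))=(T_x(G\cdot x))^\perp$ from \eqref{J-1} gives $\omega(x)(\xi_M(x),u)=0$ for every $\xi\in\g$ and every $u\in T_x(J^{-1}(\mu))$, in particular for $\xi\in\g_\mu$. Hence $\iota_\mu^*\omega$ is basic and descends to a unique $\omega_\mu$ with $\pi_\mu^*\omega_\mu=\iota_\mu^*\omega$. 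Closedness of $\omega_\mu$ then follows from $\pi_\mu^*(d\omega_\mu)=d(\iota_\mu^*\omega)=0$ together with the injectivity of $\pi_\mu^*$ on forms (as $\pi_\mu$ is a surjective submersion).

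For (ii), I would first observe that $\mathcal{R}$ is tangent to $J^{-1}(\mu)$: by Corollary \ref{cor:4.3'} we have $\mathcal{R}(J_\xi)=0$, hence $T_xJ(\mathcal{R}(x))=0$ and $\mathcal{R}(x)\in\ker T_xJ=T_x(J^{-1}(\mu))$. Projectability comes from the $G$-invariance of $\mathcal{R}$ in condition (i) of Definition \ref{D}, which infinitesimally reads $[\xi_M,\mathcal{R}]=0$ for all $\xi\in\g$; restricting to $\g_\mu$ shows that $\mathcal{R}|_{J^{-1}(\mu)}$ commutes with every vertical field, so it is $\pi_\mu$-related to a well-defined vector field $\mathcal{R}_\mu$ on $M_\mu$. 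That $\mathcal{R}_\mu$ lies in $\ker\omega_\mu$ is a one-line pullback computation: $\pi_\mu^*(i_{\mathcal{R}_\mu}\omega_\mu)=i_{\mathcal{R}}\iota_\mu^*\omega=\iota_\mu^*(i_{\mathcal{R}}\omega)=0$, and injectivity of $\pi_\mu^*$ forces $i_{\mathcal{R}_\mu}\omega_\mu=0$.

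The crux is to upgrade this to the equality $\ker\omega_\mu=\langle\mathcal{R}_\mu\rangle$, and here I would transport the pointwise kernel through the submersion. Since $T_x\pi_\mu$ is surjective, a vector at $x$ lies in $\ker(\pi_\mu^*\omega_\mu)(x)$ iff its image under $T_x\pi_\mu$ lies in $\ker\omega_\mu(\pi_\mu(x))$; thus $T_x\pi_\mu$ carries $\ker(\iota_\mu^*\omega)(x)$ onto $\ker\omega_\mu(\pi_\mu(x))$ with kernel $\ker T_x\pi_\mu=T_x(G_\mu\cdot x)$. By Proposition \ref{prop:4.6}(ii)(b), $\ker(\iota_\mu^*\omega)(x)=T_x(G_\mu\cdot x)\oplus\langle\mathcal{R}(x)\rangle$, so its image is exactly $\langle T_x\pi_\mu(\mathcal{R}(x))\rangle=\langle\mathcal{R}_\mu(\pi_\mu(x))\rangle$. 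It remains to see this line is nonzero: condition (ii) of Definition \ref{D} gives $\mathcal{R}(x)\notin T_x(G\cdot x)\supseteq T_x(G_\mu\cdot x)=\ker T_x\pi_\mu$, whence $T_x\pi_\mu(\mathcal{R}(x))\neq 0$. Therefore $\ker\omega_\mu$ is the one-dimensional line spanned by the nowhere-vanishing $\mathcal{R}_\mu$, and $(\omega_\mu,\mathcal{R}_\mu)$ is a mechanical presymplectic structure, the corank-one condition forcing $M_\mu$ to be odd-dimensional automatically.

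The main obstacle I anticipate is organizational rather than conceptual: the geometric content is entirely packaged in Proposition \ref{prop:4.6}, so the delicate points will be (a) verifying horizontality carefully enough that $\omega_\mu$ exists at all, and (b) the transfer of the kernel across $\pi_\mu$, where one must invoke both the inclusion $T_x(G_\mu\cdot x)\subseteq\ker(\iota_\mu^*\omega)(x)$ and the non-tangency hypothesis (ii) in order to conclude that the projected kernel is exactly $\langle\mathcal{R}_\mu\rangle$, neither larger nor zero.
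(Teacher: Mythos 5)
Your proposal is correct and follows essentially the same route as the paper's proof: basicness of $\iota_\mu^*\omega$ via horizontality (the paper packages this as $i_{\xi_{J^{-1}(\mu)}}\iota_\mu^*\omega=\iota_\mu^*(dJ_\xi)=0$, which is the same computation), tangency and projectability of $\mathcal{R}$ from Corollary \ref{cor:4.3'} and $G$-invariance, and the identification $\ker\omega_\mu=\langle\mathcal{R}_\mu\rangle$ by pushing the kernel description of Proposition \ref{prop:4.6}(ii)(b) through the submersion $\pi_\mu$. Your only departures are cosmetic refinements: you check full $G_\mu$-invariance rather than the infinitesimal contraction conditions, and you make explicit the nonvanishing of $\mathcal{R}_\mu$ via Definition \ref{D}(ii), a point the paper leaves implicit.
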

 \begin{proof}
(i) We will see that the 2-form $i^*\omega _\mu$ on $J^{-1}(\mu )$ is $\pi_\mu$-basic, that is,
\begin{equation}\label{26'}
i_{\xi_{J^{-1}(\mu )}}(i^*_\mu \omega )=0, \qquad 
i_{\xi_{J^{-1}(\mu )}}(d(i^*_\mu \omega ))=0,\mbox{ for }\xi \in \mathfrak{g}_\mu.
\end{equation}
This would imply that there exists a unique 2-form $\omega _\mu$ on $M_\mu$ such that
\begin{equation}\label{26''}
 \pi_\mu^*(\omega_\mu)=\iota_\mu^*\omega.
 \end{equation}
Since $\omega$ is closed, it is sufficient to prove the first part of \eqref{26'}. Now, using \eqref{Je2}, it follows that
\[
i_{\xi_{J^{-1}(\mu )}}(i^*_\mu \omega )=i^*_\mu(dJ_\xi )=0.
\]
In addition, using that $i^*_\mu\omega$ is a closed 2-form and the fact that $\pi^*_\mu$ is an injective morphism, we conclude that $\omega_\mu$ is closed.

(ii) From Corollary \ref{cor:4.3'}, we deduce that the restriction of $\mathcal{R}$ to $J^{-1}(\mu )$ is tangent to $J^{-1}(\mu )$. In addition, as $\mathcal{R}$ is $G$-invariant (see Definition \ref{D}), we have that $\mathcal{R}_{|J^{-1}(\mu )}$ is $G_\mu$-invariant and, therefore, $\pi_\mu$-projectable. So, there is a vector field ${\mathcal R}_\mu$ on $M_\mu$ such that  
\begin{equation}\label{RM}
{\mathcal R}_\mu(\pi_\mu(x))=T_x\pi_\mu({\mathcal R}(x)), \mbox{ for all }x\in J^{-1}(\mu).
\end{equation}
Now, using (\ref{w1}) and (\ref{RM}),  we have that 
\[
\pi_\mu^*(i_{{\mathcal R}_\mu}\omega_\mu)=\iota_\mu^*(i_{\mathcal R}\omega)=0.
\]
Thus, since $\pi_\mu^*$ is an injective morphism, we deduce that
\[
\mathcal{R}_\mu (\pi _\mu (x))\in \ker (\omega_\mu (\pi _\mu (x))),\mbox{ for }x\in J^{-1}(\mu ).
\]
Conversely, if $u\in T_x(J^{-1}(\mu ))$ and $(T_x\pi _\mu )(u)\in \ker (\omega_\mu (\pi_\mu (x)))$ then,
from \eqref{w1}, it follows that $u\in \ker (i^*_\mu \omega )(x)$. So, using Proposition \ref{prop:4.6}, we obtain that
\[
u=\xi_{J^{-1}(\mu )}(x)+\lambda \mathcal{R}(x), \mbox{ with }\xi\in \mathfrak{g}_\mu \mbox{ and } \lambda \in \R.
\]
Therefore,
\[
(T_x\pi_\mu )(u)=\lambda (T_x\pi _\mu )(\mathcal{R}(x))=\lambda \mathcal{R}_\mu (\pi _\mu (x))\in \langle \mathcal{R}_\mu(\pi _\mu (x))\rangle.
\]
This ends the proof of item (ii).
 \end{proof}
 
\begin{remark}\label{rmk:EMR2} In \cite{EMR2}, the authors consider  the reduction of a presymplectic Hamiltonian system on  manifolds where the presymplectic structure has arbitrary constant corank, showing a geometric reduction (Theorem 2) and a dynamic reduction (Theorem 4). In Theorem 2, it is obtained a reduced space with a presymplectic form that which has, under certain conditions, the same rank as the original one. Comparing both approaches, it can be seen that, under slightly more restrictive hypotheses than the ones assumed by us (in  \cite{EMR2}, it is required that the action is free and proper), (i) in Theorem \ref{thpr} is a consequence of their result for presymplectic structures of corank $1$. Since the proof is simple, we have decided to include it here to make the paper self-contained and facilitate the reader's understanding of the result.

However, the relevant differences lies in the dynamic result in \cite[Theorem 4]{EMR2}  and the second part of Theorem 4.1. In fact, in \cite{EMR2}, the authors obtain a reduced symplectic (not presymplectic!) Hamiltonian system. In order to deduce this, it is assumed that the kernel of the presymplectic structure is contained in the space generated by the infinitesimal generators of the action on $M$ (see  Assumption 2 in \cite{EMR2}). This condition is precisely the opposite to the one considered in our paper (see Definition \ref{D}).
\end{remark}

\section{Reduction of the evolution dynamics of a cosymplectic system}\label{sec:5}

%%%%%%%%%%%%%%%%%%%%%

To overcome the limitations of Albert's reduction Theorem stated at the end of Section \ref{sec:3}, we will modify the Hamiltonian cosymplectic structure $(M,\omega,\eta,J)$ to a new  one where the evolution vector field   $E^{(\omega,\eta)}_H$ is now  a Reeb vector field. The new momentum map determines first integrals for $E^{(\omega,\eta)}_H$ which are not in general first integrals for the  Reeb vector field or for the Hamiltonian vector field associated with  $(\omega,\eta)$ and $H$. 

\begin{proposition}\label{maJ}
Let $\phi:G\times (M,\omega,\eta) \to (M,\omega ,\eta)$ be a Hamiltonian action of a Lie group $G$ on the connected cosymplectic manifold $(M,\omega,\eta),$ with $G$-equivariant momentum map $J\colon M\to \g^*$. If $H:M\to \R$ is a $G$-invariant Hamiltonian function and $(\omega_H,\eta)$ is the cosymplectic structure defined in Proposition \ref{cc}, then $\phi:G\times (M,\omega_H,\eta) \to (M,\omega_H,\eta)$ is  a Hamiltonian  cosymplectic action   with  $G$-equivariant momentum map $J_H:M\to {\mathfrak g}^*$ defined by
\[
(J_H)_\xi=J_\xi-c_\eta(\xi)H,
\]
where $c_\eta$ is the cocycle defined by (\ref{cocycle}). 
\end{proposition}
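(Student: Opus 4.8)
The plan is to verify in turn the three assertions bundled into the statement: that $\phi$ preserves the modified structure $(\omega_H,\eta)$, that $J_H$ is a momentum map for it, and that $J_H$ is $G$-equivariant. The first is immediate from the hypotheses. Since $\phi$ was already cosymplectic for $(\omega,\eta)$ we have $\phi_g^*\eta=\eta$, and the $G$-invariance of $H$ gives $\phi_g^*(dH)=d(H\circ\phi_g)=dH$; hence, writing $\omega_H=\omega+dH\wedge\eta$ as in Proposition \ref{cc} and using $\phi_g^*\omega=\omega$, I would compute $\phi_g^*\omega_H=\omega+dH\wedge\eta=\omega_H$, so that $\phi$ is indeed a cosymplectic action for $(\omega_H,\eta)$.

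For the momentum map condition I would expand $i_{\xi_M}\omega_H=i_{\xi_M}\omega+i_{\xi_M}(dH\wedge\eta)$. The first summand equals $dJ_\xi$ because $\phi$ is Hamiltonian for $(\omega,\eta)$. For the second, the antiderivation property of the interior product gives
\[
i_{\xi_M}(dH\wedge\eta)=\xi_M(H)\,\eta-\eta(\xi_M)\,dH.
\]
Here $\xi_M(H)=0$ since $H$ is $G$-invariant, and $\eta(\xi_M)=c_\eta(\xi)$ is a constant on the connected manifold $M$. Therefore $i_{\xi_M}\omega_H=dJ_\xi-c_\eta(\xi)\,dH=d\big(J_\xi-c_\eta(\xi)H\big)=d(J_H)_\xi$, which is exactly the Hamiltonian condition for $J_H$.

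The delicate point, and where I expect the real content to lie, is the equivariance of $J_H$. Viewing $J_H(x)=J(x)-H(x)\,c_\eta$ as an element of $\g^*$, combining the $G$-equivariance of $J$ with the $G$-invariance of $H$ reduces the claim $J_H\circ\phi_g=Ad_g^*\circ J_H$ to the single identity $Ad_g^*\,c_\eta=c_\eta$ for every $g\in G$. To establish this I would use the standard transformation law $T_x\phi_g(\xi_M(x))=(Ad_g\xi)_M(\phi_g(x))$ for fundamental vector fields together with $\phi_g^*\eta=\eta$: pairing $\eta$ against both sides and invoking that $\eta(\xi_M)$ is constant in $x$ yields $c_\eta(\xi)=c_\eta(Ad_g\xi)$, i.e. $Ad_g^*c_\eta=c_\eta$. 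The main obstacle is thus not a computation but recognizing that the $Ad^*$-invariance of the cocycle $c_\eta$—a consequence of $\phi_g^*\eta=\eta$ and the connectedness of $M$, valid for all of $G$ and not merely its identity component—is precisely what makes the correction term $-c_\eta(\xi)H$ compatible with the coadjoint representation. With this identity in hand the equivariance of $J_H$ follows directly, completing the proof.
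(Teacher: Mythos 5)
Your proposal is correct and follows essentially the same route as the paper: invariance of $\omega_H$ from $\phi_g^*\omega=\omega$, $\phi_g^*\eta=\eta$, $\phi_g^*dH=dH$; the momentum map identity via $i_{\xi_M}(dH\wedge\eta)=\xi_M(H)\eta-\eta(\xi_M)dH$ with $\xi_M(H)=0$ and $\eta(\xi_M)=c_\eta(\xi)$ constant; and equivariance reduced to $Ad_g^*c_\eta=c_\eta$, proved exactly as in the paper from the transformation law of fundamental vector fields, the $G$-invariance of $\eta$, and the constancy of $\eta(\xi_M)$ on the connected manifold $M$.
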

\begin{proof}
From the $G$-invariance of $H$ and of the cosymplectic structure $(\omega,\eta)$, we deduce that $\phi:G\times (M,\omega_H,\eta) \to (M,\omega_H,\eta)$ is a cosymplectic action.

On the other hand, since  $\eta(\xi_M)=c_\eta (\xi)$, $H$ is invariant  and $\phi$ is Hamiltonian, then
\[
i_{\xi_M}\omega_H=i_{\xi_M}(\omega+dH\wedge \eta)= dJ_\xi-\eta(\xi_M)dH=d(J_\xi-c_\eta(\xi)H)= d (J_H)_\xi.
\]
Now, if $g\in G$ we have that 
\[
J_H\circ \phi_g=J\circ \phi_g -c_\eta (H\circ \phi_g)=Ad^*_g\circ J- H c_\eta.
\]
Here we have used the equivariance of $J$ and the fact that $H$ is $G$-invariant. So, we must see that $Ad^*_g(c_\eta )=c_\eta$. For this purpose, we will use the following relation
\[ 
(Ad_g\xi)_M(x)=T_{{\phi}_{g^{-1}}(x)}\phi_g(\xi_M(\phi_{g^{-1}(x)})),\mbox{ for }g\in G \mbox{ and } x\in M.
\]
Thus, since $\eta$ is $G$-invariant,
\[
\eta(x)((Ad_g(\xi))_M(x))=\eta(x)(T_{{\phi}_{g^{-1}}(x)}\phi_g(\xi_M(\phi_{g^{-1}}(x))))=\eta(\phi_{g^{-1}}(x))(\xi_M(\phi_{g^{-1}}(x)))
\]
 that is, $$Ad^*_g(c_\eta)(\xi)=\eta((Ad_g\xi)_M)=\eta(\xi_M)\circ \phi_{g^{-1}}=c_\eta(\xi).$$

In this last equality we have used that $\eta(\xi_M)$ is constant for each $\xi\in {\mathfrak g}.$

\end{proof}

%%%%%%%%%%%%%%%%%%%%%%%%
The following result gives an answer about  how to reduce the evolution dynamics of a cosymplectic Hamiltonian system. The idea is to transform the evolution dynamics  into Reeb dynamics by modifying the cosymplectic structure. With this modification, we obtain that the information of the Hamiltonian function  is encoded within this new cosymplectic structure. In a second step,  one proves that, if we do an adequate modification of the momentum map,   the cosymplectic action with this new  momentum map  is Hamiltonian with respect to the new cosymplectic structure. However, in general, this Hamiltonian cosymplectic action does not satisfy  the Albert conditions. This reason justifies the use of the reduction of mechanical presymplectic structures instead of the reduction of cosymplectic structures. Next, we will present this process in detail.

Let $(M,\omega,\eta)$ be a connected cosymplectic manifold and $H:M\to {\mathbb R}$ a Hamiltonian function. Note that, if ${\mathcal R}$  is the Reeb vector field of this cosymplectic structure, then  $\ker\omega=\langle {\mathcal R}\rangle$ and  therefore $(\omega,{\mathcal R})$ is a mechanical presymplectic structure on $M$.

Moreover, the evolution vector field $E^{(\omega ,\eta)}_H$ is the Reeb vector field of the cosymplectic structure $(\omega_H=\omega+dH\wedge \eta, \eta)$ (see Proposition \ref{cc}). Therefore, $(\omega_H, E^{(\omega ,\eta)}_H)$ also is a mechanical presymplectic structure on $M$.

Now, let $\phi:G\times (M,\omega,\eta) \to (M,\omega,\eta)$ be  a Hamiltonian  cosymplectic action  of a Lie group $G$ on the connected cosymplectic manifold $(M,\omega,\eta)$ with  equivariant momentum map $J:M\to {\mathfrak g}^*.$ If $H:M\to {\mathbb R}$ is $G$-invariant, then, using Proposition \ref{maJ}, we have that $\phi:G\times (M,\omega_H,\eta) \to (M,\omega_H,\eta)$ is  a Hamiltonian cosymplectic action   with  $G$-equivariant momentum map $J_H:M\to {\mathfrak g}^*$ defined by
\begin{equation}\label{JH}
J_H=J- H c_\eta. 
\end{equation}
Moreover,  $E^{(\omega ,\eta)}_H$ is invariant.  

If, additionally, $E^{(\omega ,\eta)}_H(x)\notin T_x(G\cdot x)$ for all $x\in M$, we have that  the action $\phi:G\times (M,\omega_H,E^{(\omega ,\eta)}_H) \to (M,\omega_H,E^{(\omega ,\eta)}_H)$ is a Hamiltonian presymplectic action, and thus we can apply Theorem \ref{thpr}. So, we obtain the main result of this section.
 
 \begin{theorem}
 Let $\phi:G\times M\to M$ be a Hamiltonian cosymplectic action of a Lie group $G$ on the connected cosymplectic manifold  $(M,\omega,\eta)$    with $G$-equivariant momentum map $J:M\to {\mathfrak g}^*$ and $H:M\to {\mathbb R}$ a $G$-invariant Hamiltonian function.
 
Suppose that $\phi$ is infinitesimally free,  $E^{(\omega ,\eta )}_H(x)\notin T_x(G\cdot x),$ for all $x\in M$, and $\mu$ is an element of ${\mathfrak g}^*$ such that $J^{-1}_H(\mu)\not=\emptyset.$
 
If  the space of orbits $M_\mu=J_H^{-1}(\mu)/G_\mu$ is a manifold such that the canonical projection $\pi_\mu:J_H^{-1}(\mu)\to M_\mu$ is a submersion, then
 \begin{enumerate}
 \item[(i)] $\phi$ is a Hamiltonian presymplectic  action with respect to the mechanical presymplectic structure $(\omega_H,E^{(\omega ,\eta)}_H)$ and with momentum map $J_H$ given by (\ref{JH}).
 \item[(ii)] There exists a unique closed $2$-form $(\omega_H)_\mu$ on $M_\mu$ such that 
\[ 
 \pi_\mu^*((\omega_H)_\mu)=\iota_\mu^*\omega_H,
 \]
 where $\iota_\mu:J_H^{-1}(\mu)\to M$ is the inclusion map. 
\item[(iii)] The vector field $E^{(\omega ,\eta )}_H$ restricts to $J_H^{-1}(\mu)$ and its restriction is $\pi_\mu$ projectable. Its projection is a vector field ${\mathcal R}_\mu$ such that 
\[
\ker\omega_\mu=\langle {\mathcal R}_\mu\rangle.
\]
Thus, $((\omega_H)_\mu,{\mathcal R}_\mu)$ is a reduced mechanical presymplectic structure on $M_\mu$.
\end{enumerate}
\label{th:reductionMPS}
 \end{theorem}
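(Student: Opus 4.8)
The plan is to show that this theorem is essentially a corollary of the work already assembled: Proposition~\ref{maJ} supplies the cosymplectic reformulation, Proposition~\ref{cc} identifies the evolution field with a Reeb field, and Theorem~\ref{thpr} does the actual reduction. The strategy is therefore to verify that all hypotheses of Theorem~\ref{thpr} hold for the pair $(\omega_H, E^{(\omega,\eta)}_H)$ and then to invoke it directly.

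First I would establish item (i). By Proposition~\ref{cc}, $(\omega_H = \omega + dH\wedge\eta,\eta)$ is a cosymplectic structure whose Reeb vector field is precisely $E^{(\omega,\eta)}_H$; hence $\ker\omega_H = \langle E^{(\omega,\eta)}_H\rangle$, so $(\omega_H, E^{(\omega,\eta)}_H)$ is a mechanical presymplectic structure in the sense of Definition~\ref{D}. Next, Proposition~\ref{maJ} shows that $\phi$ is a Hamiltonian cosymplectic action for $(\omega_H,\eta)$ with $G$-equivariant momentum map $J_H = J - H c_\eta$ satisfying $i_{\xi_M}\omega_H = d(J_H)_\xi$ for all $\xi\in\mathfrak g$. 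In particular condition~(i) of Definition~\ref{D} (invariance of $\omega_H$ and of the Reeb field $E^{(\omega,\eta)}_H$) is met, and the momentum equation~\eqref{Je2} holds for $J_H$. It remains only to note condition~(ii) of Definition~\ref{D}: this is exactly the transversality hypothesis $E^{(\omega,\eta)}_H(x)\notin T_x(G\cdot x)$ assumed in the statement. Together these verify that $\phi$ is a Hamiltonian presymplectic action for $(\omega_H, E^{(\omega,\eta)}_H)$ with $G$-equivariant momentum map $J_H$, which is item~(i).

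For items (ii) and (iii) I would simply apply Theorem~\ref{thpr} to the mechanical presymplectic manifold $(M,\omega_H, E^{(\omega,\eta)}_H)$ with the action $\phi$, the momentum map $J_H$, and the value $\mu$. The remaining regularity hypotheses of that theorem---$\phi$ infinitesimally free, $J_H^{-1}(\mu)\neq\emptyset$, and the orbit space $M_\mu = J_H^{-1}(\mu)/G_\mu$ a manifold with $\pi_\mu$ a submersion---are all part of the present statement. Theorem~\ref{thpr}(i) then yields a unique closed $2$-form $(\omega_H)_\mu$ on $M_\mu$ with $\pi_\mu^*((\omega_H)_\mu)=\iota_\mu^*\omega_H$, which is item~(ii); Theorem~\ref{thpr}(ii) shows that the Reeb field of $(\omega_H, E^{(\omega,\eta)}_H)$, namely $E^{(\omega,\eta)}_H$, restricts to $J_H^{-1}(\mu)$, is $\pi_\mu$-projectable, and projects to a vector field $\mathcal R_\mu$ with $\ker(\omega_H)_\mu = \langle\mathcal R_\mu\rangle$, giving item~(iii) and making $((\omega_H)_\mu,\mathcal R_\mu)$ a reduced mechanical presymplectic structure.

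I expect no genuine obstacle here, since the theorem is a packaging of earlier results; the only point requiring care is bookkeeping, namely checking that the momentum map used for the level set is $J_H$ (not the original $J$) and that the transversality condition is imposed on $E^{(\omega,\eta)}_H$ rather than on $\mathcal R$. The substantive mathematical content---the linear-algebraic computation of $\ker(\iota_\mu^*\omega_H)$ via Lemma~\ref{Lema:4.5} and the projectability argument---has already been discharged inside Theorem~\ref{thpr} and Proposition~\ref{prop:4.6}, so the present proof is essentially a citation with hypothesis-matching.
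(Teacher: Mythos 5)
Your proposal is correct and follows essentially the same route as the paper: the paper also proves this theorem by combining Proposition~\ref{cc} (to identify $E^{(\omega,\eta)}_H$ as the Reeb vector field of $(\omega_H,\eta)$, hence $(\omega_H,E^{(\omega,\eta)}_H)$ as a mechanical presymplectic structure), Proposition~\ref{maJ} (to obtain the Hamiltonian cosymplectic action with momentum map $J_H=J-Hc_\eta$ and the invariance of $E^{(\omega,\eta)}_H$), and the transversality hypothesis, and then invoking Theorem~\ref{thpr}. Your bookkeeping remarks (level sets of $J_H$ rather than $J$, transversality imposed on $E^{(\omega,\eta)}_H$ rather than $\mathcal{R}$) match the paper's treatment exactly.
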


Note that, using similar ideas as the ones described in Remark \ref{rem:aff}, in Theorem \ref{thpr}, Proposition \ref{maJ} and in Theorem \ref{th:reductionMPS} the $G$-equivariance condition is not necessary.

The following diagram summarizes the reduction process of the evolution dinamics of the system $(M,\omega,\eta,H).$ 

 \bigskip

%%%%%%%%%%%%%%%%%%%%%%%%%
\hspace{0cm}
{\tiny
$\xymatrix{
*+[F]{\mbox{ \begin{tabular}{c}{\sc Evolution dynamics }\\[3pt]$(M,{\omega,\eta,{\mathcal R})}$ cosymplectic system\\[3pt]$H:M\to {\mathbb R}$ $G$-invariant Hamiltonian function\\[3pt]$E^{(\omega ,\eta )}_H$ evolution vector field \\[3pt] $\phi: G\times M\to M$ Hamiltonian cosymplectic action\ \\[3pt]$J:M \to \g^*$ equivariant momentum map \\[3pt] $E^{(\omega ,\eta )}_H(x)\notin T_x(G\cdot x)$ \end{tabular}}}
\ar[rrr]^{\kern-20pt\mbox{\begin{tabular}{c}Modify to \\Reeb dynamics\end{tabular}}}
&&& *+[F]{\mbox{ \begin{tabular}{c}{\sc Reeb dynamics }\\[3pt]$(M,\omega_H=\omega + dH\wedge \eta,\eta,E^{(\omega ,\eta )}_H)$ cosymplectic manifold  \\[3pt] $\phi:G\times M\to M$ Hamiltonian  action\\[3pt] $J_H: M \to {\mathfrak g}^*$ equivariant momentum map  \\[3pt] $E^{(\omega ,\eta )}_H\notin T_x(G\cdot x)$\\[3pt]  \end{tabular}}}
\ar[dd]^{\mbox{\begin{tabular}{c}$\mu\in \g*$ \\[5pt] Mechanical presymplectic reduction\end{tabular}}}\\\\
 &&& *+[F]{\mbox{ \begin{tabular}{c}{\sc Reduced mechanical presymplectic manifold }\\[3pt] $(J_H^{-1}(\mu)/G_\mu,\omega_\mu,  ({E}^{(\omega ,\eta )}_H)_\mu)$\end{tabular}}}}$}

\begin{remark}
Suppose that $(\omega, \eta)$ is a cosymplectic structure on a manifold $M$, $H: M \to {\mathbb R}$ is Hamiltonian function and $G$ is a Lie group which acts on $M$. So, we can consider the mechanical presymplectic structure $(\omega_H, {\mathcal R}_H)$, with 
\[
\omega_H = \omega + dH \wedge \eta
\]
and ${\mathcal R}_H$ the Reeb vector field of the cosymplectic structure $(\omega_H, \eta)$. Then, in order to apply the results in Section \ref{section4} on reduction, we only need that the $2$-form $\omega_H$ and the vector field ${\mathcal R}_H$ to be $G$-invariant and that ${\mathcal R}_H(x) \notin T_x(G\cdot x)$, for all $x \in M$. We remark that it is not necessary to assume that the cosymplectic structure $(\omega, \eta)$ and the Hamiltonian function $H$ are $G$-invariant (as in Theorem \ref{th:reductionMPS}).
\end{remark}

\section{Examples}\label{section:examples}

\subsection{\for{toc}{$N$-dimensional harmonic oscillator}\except{toc}{$N$-dimensional harmonic oscillator seen by an observer that moves with a constant velocity (continuing with Example \ref{example1})}}

Consider a classical particle of mass $m$ moving in a harmonic oscillator potential with frequency $\Omega$ in $N$ dimensions described  in Example \ref{example1}. We recall that in this example,  the Latin indexes run from $1$ to $N$ while Greek indexes run from $2$ to $N$. 

From  (\ref{xiM}), one deduces that 
the action $\phi$ is infinitesimally free and  $E^{(\omega,\eta)}_H(q^i,p_i,t)\in \langle 1_M(q^i,p_i,t)\rangle=\langle\frac{\partial}{\partial t}-v \frac{\partial}{\partial q^1}\rangle$ if and only if $(q^i,p_i,t)$ belongs to the closed submanifold 
$$C=\{ (q^i,p_i,t)\in T^*\R^N\times \R\big| \; q^1=-vt, q^\alpha=0, p_1=-mv, p_\alpha=0,\mbox{ with } t\in \R\}.$$

In what follows we consider that $M$ is the open submanifold $(T^*\R^N\times \R)-C.$ Now, we look for the momentum map for the cosymplectic action $\phi$ with respect to $(\omega,\eta)$. Let  $J :M \to \g^*$ be 
the equivariant momentum map  $J_\xi (q^i,p_i,t)= - \xi v p_1$ for the cosymplectic structure $(\omega,\eta)$ described in Example \ref{example1}. 

We recall from \eqref{eq:J_Ha} that the modified momentum map $J_H = J - H c_\eta$ is given by
\begin{equation}
    \begin{split}
        (J_H)_\xi (q^i,p_i,t)&=\xi\left( - v p_1 - \frac{1}{2m} \sum_{i=1}^N p_i^2 - \frac{1}{2} m \Omega^2 \left( (q^1+v t)^2 + \sum_{\alpha=2}^N (q^\alpha)^2 \right)\right) \\
        &=\xi\left(\frac{m v^2}{2} -  \frac{1}{2m} \left( (p_1 + m v)^2 + \sum_{\alpha=2}^N p_\alpha^2 \right) - \frac{1}{2} m \Omega^2 \left( (q^1+v t)^2 + \sum_{\alpha=2}^N (q^\alpha)^2 \right)\right).
    \end{split}
\end{equation}
Now, we have the following cases: 
\begin{itemize}
\item 
For any $\mu < \displaystyle\frac{m v^2}{2}$, the preimage $J^{-1}_H (\mu)$ is the $2N$-dimensional submanifold given by
$$
    J^{-1}_H (\mu) = \left\{ \begin{array}{rcl}(q^i, p_i,t) \in M \; \bigg| \; &&\displaystyle \frac{1}{2m} \bigg( (p_1 + m v)^2 + \displaystyle\sum_{\alpha=2}^N p_\alpha^2 \bigg) + \displaystyle\frac{1}{2} m \Omega^2 \bigg( (q^1+v t)^2 + \displaystyle\sum_{\alpha=2}^N (q^\alpha)^2 \bigg) \\[5pt]&& = \displaystyle\frac{m v^2}{2} - \mu \end{array}\right\}.
$$

\item For any $\mu \geq \displaystyle\frac{m v^2}{2}$, we have that $J^{-1}_H (\mu) =\emptyset$.
\end{itemize}

When $\mu < \displaystyle\frac{m v^2}{2},$ to describe the quotient $J^{-1}_H (\mu)/\R$, let us define $\N=J^{-1}_H (\mu) \cap \varphi^{-1}(0)$, where $\varphi$ is the first projection, that is $\varphi(q^i,p_i,t) = q^1$. In fact, the closed manifold  $\N$ is the $2N$-dimensional ellipsoid given by
\begin{equation*}
\begin{split}
    \N &= \left\{ (q^\alpha, p_1, p_\alpha,t) \in \R^{2N}-\{(0,-mv,0,0)\} \; \bigg| \; \frac{1}{2m} \bigg( (p_1 + m v)^2 + \sum_{\alpha=2}^N p_\alpha^2 \bigg) + \frac{1}{2} m \Omega^2 \bigg( (v t)^2 + \sum_{\alpha=2}^N (q^\alpha)^2 \bigg) = \frac{m v^2}{2} - \mu \right\} \\
    &= \left\{ (q^\alpha, p_1, p_\alpha,t) \in \R^{2N} \; \bigg| \; \frac{1}{2m} \bigg( (p_1 + m v)^2 + \sum_{\alpha=2}^N p_\alpha^2 \bigg) + \frac{1}{2} m \Omega^2 \bigg( (v t)^2 + \sum_{\alpha=2}^N (q^\alpha)^2 \bigg) = \frac{m v^2}{2} - \mu \right\}.
    \end{split}
\end{equation*}
Deriving the constraint that defines ${\mathcal N},$ we deduce that the points of this submanifold satisfy 
\begin{equation}
    i^*\bigg(v d p_1 + \frac{1}{m} \sum_{i=1}^N p_i d p_i + m \Omega^2 \bigg( v^2 t dt + \sum_{\alpha=2}^N q^\alpha d q^\alpha \bigg) \bigg)= 0 ,
    \label{eq:tanN}
\end{equation}
where $i : M \to \N$ is the inclusion map.

From this equation we deduce that  the  map 
\begin{equation}\label{diff}
    \begin{split}
        \psi : \mathbb R \times \N &\to J^{-1}_H (\mu) \\
        (\hat t,q^\alpha,p_i,t) &\to (-v \hat t,q^\alpha,p_i,t+\hat t)
    \end{split}
\end{equation}
is well defined and, in fact, it is a global diffeomorphism. 

Now,  we have that  this diffeomorphism is equivariant, i.e. this diagram is commutative  

$$
\xymatrix{
\R\times \N  \ar[rr]^{\psi}\ar[d]^{\Phi_s}&& J_H^{-1}(\mu)\ar[d]^{\phi_s}
\\
\R\times \N\ar[rr]^{\psi} && J_H^{-1}(\mu) 
}
$$
where $\Phi:\R\times (\R\times \N)\to \R\times \N$ is the action $\Phi(s,(\hat t,q^\alpha,p_i,t))=(s+\hat t,q^\alpha,p_i,t).$ 

Therefore, we have that $J_H^{-1}(\mu)/\R\cong {\mathcal N}.$

Using the global diffeomorphism given in (\ref{diff}), we can pull-back $i^*_\mu\omega_H \in \Omega^2(J^{-1}_H (\mu))$ to $\psi^* i^*_\mu(\omega_H) \in \Omega^2(\mathbb R \times \N)$, which restricts to $\N. $ Thus, the corresponding $2$-form on $\R\times \N$ is (see (\ref{w}))
\begin{equation*}
\begin{split}
     \sum_{\alpha=2}^N d q^\alpha \wedge d p_\alpha + \frac{1}{m} \sum_{i=1}^N p_i d p_i \wedge d t + m \Omega^2 \sum_{\alpha=2}^N q^\alpha d q^\alpha \wedge d t 
    &+ \bigg( v d p_1 + \frac{1}{m} \sum_{i=1}^N p_i d p_i + m \Omega^2 \big( \sum_{\alpha=2}^N q^\alpha d q^\alpha + v^2 t d t \big) \bigg) \wedge d \hat{t} ,
\end{split}
\end{equation*}
but from \eqref{eq:tanN} the last term identically vanishes on the submanifold $\N$, and therefore, the reduced $2$-form on $\N$ is 
\begin{equation}
    (\omega_H)_\mu = \sum_{\alpha=2}^N d q^\alpha \wedge d p_\alpha + \frac{1}{m} \sum_{i=1}^N p_i d p_i \wedge d t + m \Omega^2 \sum_{\alpha=2}^N q^\alpha d q^\alpha \wedge d t .
\end{equation}
Finally, the restriction of evolution vector field $E^{(\omega,\eta)}_H$ to $J_H^{-1}(\mu)$ can be seen on $\R\times \N$ as 
\begin{equation*}
    E^{(\omega,\eta)}_H=\frac{1}{m} \bigg( \sum_{\alpha=2}^N p_\alpha \frac{\partial}{\partial q^\alpha} - \frac{p_1}{v}\frac{\partial}{\partial \hat t} \bigg) - m \Omega^2 \bigg( v t \frac{\partial}{\partial p_1} + \sum_{\alpha=2}^N q^\alpha \frac{\partial}{\partial p_\alpha}\bigg) + (\frac{p_1}{mv}+1)\frac{\partial}{\partial t} .
\end{equation*}
which projects on the reduced space $\N$ in the reduced vector field 
\begin{equation*}
    (E^{(\omega,\eta)}_H)_\mu=\frac{1}{m} \sum_{\alpha=2}^N p_\alpha \frac{\partial}{\partial q^\alpha} - m \Omega^2 \bigg( v t \frac{\partial}{\partial p_1} + \sum_{\alpha=2}^N q^\alpha \frac{\partial}{\partial p_\alpha}\bigg) + (\frac{p_1}{mv}+1)\frac{\partial}{\partial t} .
\end{equation*}

Thus, $(\N,(\omega_H)_\mu,(E^{(\omega,\eta)}_H)_\mu)$ is the reduced mechanical presymplectic manifold.

\subsection{Perturbation by a linearly polarized monocromatic plane wave}
\label{sec:planewave}

Consider an electron described by the time-independent Hamiltonian function
\begin{equation}
H_0 ({\mathbf q}, {\mathbf p}, t) = \frac{\mathbf p^2}{2 m} + V_0(\mathbf q) ,
\end{equation}
where $\mathbf q = (q^1,q^2,q^3)$ and $\mathbf p = (p_1,p_2,p_3)$, and $\mathbf p^2$ is shorthand for $\mathbf p \cdot \mathbf p$. Its coupling with an external electromagnetic field is given by 
\begin{equation}
H ({\mathbf q}, {\mathbf p}, t) = \frac{(\mathbf p- e \mathbf A(\mathbf q,t) )^2}{2 m} + e \varphi (\mathbf q,t) + V_0(\mathbf q) ,
\end{equation}
where $\mathbf A$ is the vector potential and $\varphi$ the scalar potential. Recall that the electric and magnetic fields are given by
\begin{equation}
\begin{split}
\mathbf E &= - \nabla \varphi - \frac{\partial \mathbf A}{\partial t} \\
\mathbf B &= \nabla \times \mathbf A \\
\end{split} .
\end{equation}

An interesting example is provided by a free electron, so $V_0(\mathbf q) = 0$, which is perturbed by a linearly polarized monocromatic plane wave (for instance a laser), \emph{i.e.}
\begin{equation}
\varphi = 0, \qquad\qquad  \mathbf A = A_0 \boldsymbol{\epsilon} \cos (\mathbf k \cdot \mathbf q - \omega t) \\
\end{equation}
where $\mathbf k$ is the wave--vector (recall that $\omega = | \mathbf k | c$) and $\boldsymbol \epsilon$ is the polarization vector (with $\boldsymbol \epsilon^2=1$).  Therefore, we can write
\begin{equation*}
H({\mathbf q}, {\mathbf p}, t) = H_0({\mathbf q}, {\mathbf p}) + H_1 ({\mathbf q}, {\mathbf p}, t) + H_2 ({\mathbf q}, {\mathbf p}, t),
\end{equation*}
with
\begin{equation*}
H_1 ({\mathbf q}, {\mathbf p}, t) = - \frac{e A_0}{m} \boldsymbol \epsilon \cdot \mathbf p \cos (\mathbf k \cdot \mathbf q - \omega t)
\end{equation*}
and
\begin{equation}
H_2 ({\mathbf q}, {\mathbf p}, t) =  \frac{e^2 A_0^2}{2 m} \cos^2 (\mathbf k \cdot \mathbf q - \omega t)
\end{equation}

If we assume that the perturbation is small ($A_0 \ll 1$), which is typically the case, we can neglect $H_2 (\mathbf q, \mathbf p, t)$.
Without any loss of generality, we can fix $\mathbf k = \mathbf u_1$ and $\boldsymbol \epsilon = \mathbf u_2$,  and thus we have that the Hamiltonian function of the system is given by
 
\begin{equation}
H(q^i, p_i, t) = \frac{1}{2 m} \sum_{i=1}^3 p_i^2 - \frac{e A_0}{m} p_2 \cos (q^1 - c t),
\end{equation}
whose differential reads
\begin{equation}
d H (q^i,p_i,t) = \frac{1}{m} \sum_{i=1}^3 p_i d p_i - \frac{e A_0}{m} \left( \cos (q^1 - c t) d p_2 - p_2 \sin (q^1 - c t) d q^1 + c \sin (q^1 - c t) d t \right).
\end{equation}
On $T^*\R^3\times \R\cong \R^7$ we consider the standard cosymplectic structure given in (\ref{eq:local}).

Then  the evolution vector field is
\begin{equation}\label{E2}
E^{(\omega,\eta)}_H (q^i,p_i,t) = \frac{1}{m} \sum_{i=1}^3 p_i \frac{\partial}{\partial q^i} - \frac{e A_0}{m} \cos (q^1 - c t) \frac{\partial}{\partial q^2} - \frac{e A_0}{m} p_2 \sin (q^1 - c t) \frac{\partial}{\partial p_1} + \frac{\partial}{\partial t} ,
\end{equation}\label{w2}
while the modified cosymplectic structure reads (see \eqref{wH})
\begin{equation*}
\left\{
\begin{array}{rcl}
\omega_H (q^i,p_i,t) &=& \displaystyle\sum_{i=1}^3 d q^i \wedge d p_i + \frac{1}{m} \sum_{i=1}^3 p_i d p_i \wedge d t - \frac{e A_0}{m} \left( \cos (q^1 - c t) d p_2 \wedge d t - p_2 \sin (q^1 - c t) dq^1 \wedge d t\right) \\
\eta&=&\displaystyle{dt}
\end{array}\right.
\end{equation*}

For this system, one consider the following cosymplectic symmetry   $\phi: \R \times (T^*\R^3\times \R) \to (T^*\R^3\times \R)$,  defined  by 
\begin{equation*}
    \phi (s, q^i,p_i,t) = (q^1+cs,q^\alpha,p_i,t + s) .
\end{equation*}
for the standard  cosymplectic structure  on $T^*\R^3\times \R.$ The fundamental vector field $1_M$ of $1\in \R$  associated to this action  is given by
\begin{equation*}
    1_M = c \frac{\partial}{\partial q^1} + \frac{\partial}{\partial t}.
\end{equation*}
Note that $\eta (1_M) =1$ and thus this action doesn't satisfy \eqref{cA}, that is, Albert's condition doesn't hold.  The integration of this vector field is 
\begin{equation*}
    \begin{split}
        q^1(s) &= c s + q^1 , \\
        q^\alpha (s) &= q^\alpha , \\
        p_i(s) &= p_i, \\
        t(s) &= s + t 
    \end{split}
\end{equation*}
with $\alpha\in \{2,3\}$ and $i\in \{1,2,3\}.$ A momentum map $J\colon T^*\R^N\times \R\to \R $ for $(\omega, \eta)$ satisfies that $i_{1_M} \omega = c  d p_1 = d J_1$, so we can take  $J(q^i,p_i,t) =   c p_1$. 

On the other hand, 
$E^{(\omega,\eta)}_H(q^i,p_i,t)\in \langle 1_M(q^i,p_i,t)\rangle$ if and only if $(q^i,p_i,t)$ belongs to the closed submanifold 
\begin{equation*}
    C = C_1 \cup C_2 \subset \R^7 ,
\end{equation*}
where
\begin{equation*}
    C_1 = \left\{ \left(ct+\frac{(2n-1)\pi}{2},q^2,q^3,mc,0,0,t \right)\; |\; t,q^\alpha\in \R,n\in \mathbb Z\right\}\subset \R^7 ,
\end{equation*}
and 
\begin{equation*}
    C_2 = \left\{\left(ct+n\pi,q^2,q^3,mc,(\--1)^{n}e A_0,0,t \right)\; |\; t,q^\alpha\in \R,n\in \mathbb Z\right\}\subset \R^7 .
\end{equation*}

Since $c_\eta(1)=\eta(1_M)=1$, the modified momentum map $J_H=J-H:(\R^7-C)\to \R$ on the open submanifold $\R^7-C$  is given by
\begin{equation*}
    J_H(q^i,p_i,t) = c p_1 - \frac{1}{2 m} \sum_{i=1}^3 p_i^2 + \frac{e A_0}{m} p_2 \cos (q^1 - c t).
\end{equation*}
The 6-dimensional level set of $J_H$ for $\mu\in \R$
\begin{equation*}
\begin{split}
    J_H^{-1} (\mu) &= \left\{ (q^i,p_i,t) \in \mathbb R^{7}-C \, | \,  c p_1 - \frac{1}{2 m} \sum_{i=1}^3 p_i^2 + \frac{e A_0}{m} p_2 \cos (q^1 - c t) = \mu \right\} \\
    &= \left\{ (q^i,p_i,t) \in \mathbb R^7-C \, | \, (p_1 - m c)^2 + (p_2 - e A_0 \cos (q^1 - c t))^2 + p_3^2 = m^2 c^2 + (e A_0)^2 \cos^2 (q^1 - c t) - 2 m \mu
    \right\} .
\end{split}
\end{equation*}
Note that none of the points in $C$ belong to $J_H^{-1} (\mu)$ so we can finally write
\begin{equation*}
    J_H^{-1} (\mu) = \left\{ (q^i,p_i,t) \in \mathbb R^7 \, | \, (p_1 - m c)^2 + (p_2 - e A_0 \cos (q^1 - c t))^2 + p_3^2 = m^2 c^2 + (e A_0)^2 \cos^2 (q^1 - c t) - 2 m \mu
    \right\} .
\end{equation*}

\begin{figure}[H]
\begin{center}
    \includegraphics[scale=0.6]{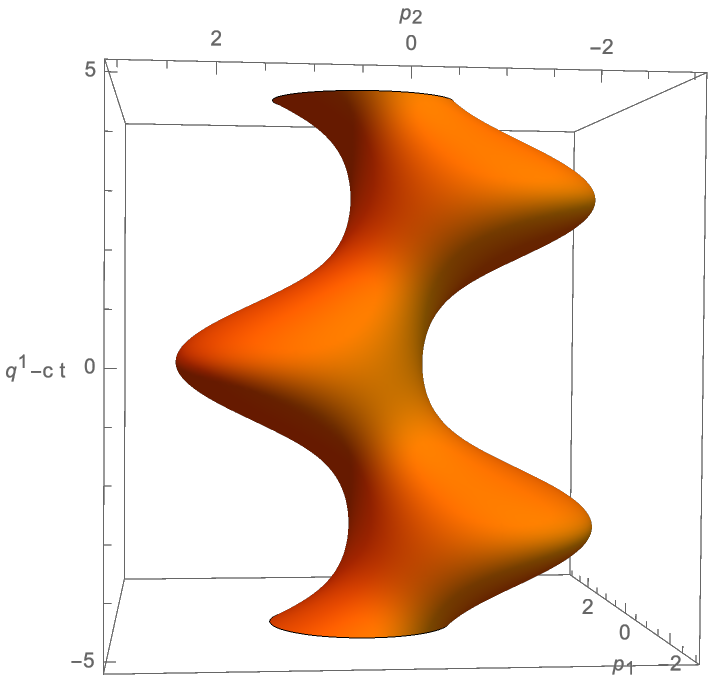}\hspace{0.5cm}\includegraphics[scale=0.6]{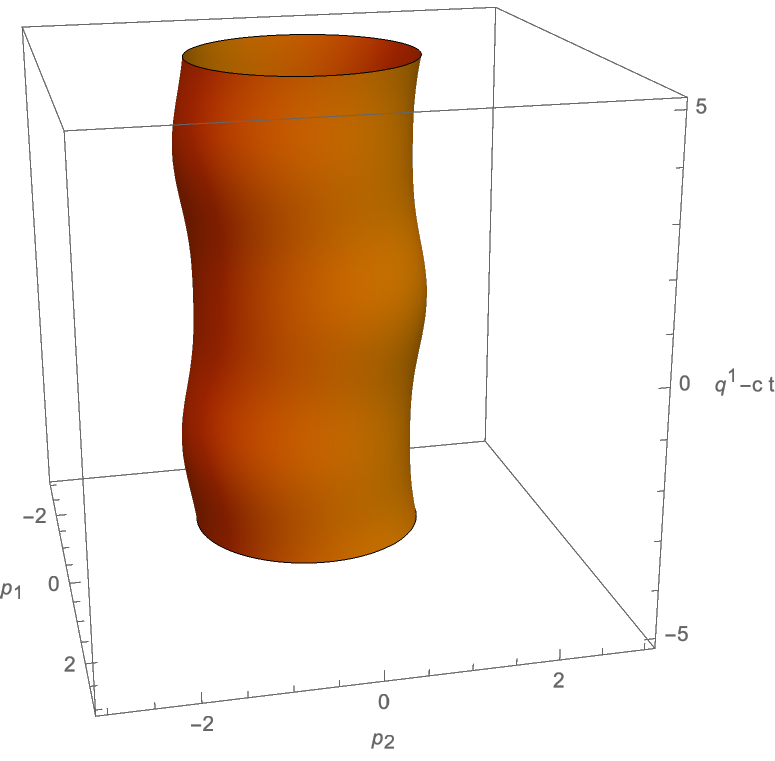}
    \caption{Cut with $p_3=0, q^2=0,q^3=0$ of the level sets $J_H^{-1} (0)$ for $e A_0 = 1$ (left) and $e A_0 = 0.1$ (right). We have used unit such that $m=c=1$.}
    \label{fig:}
    \end{center}
\end{figure}

Now, we consider the $5$-dimensional closed submanifold
\begin{equation}
    \N = J_H^{-1} (\mu) \cap \{ q^1 = 0\} \subset \R^7
   \end{equation}
given by 
\begin{equation}\label{const}
   \N =  \left\{ (q^i,p_i,t) \in \mathbb R^7 \, | \, 2mcp_1 - \sum_{i=1}^3 p_i^2 + 2{e A_0} p_2 \cos (c t) = 2m\mu
    \right\} .
\end{equation}
Deriving the previous constraint, we deduce that the points of this submanifold satisfy 
\begin{equation}
    i^*\bigg(2m \, c \, dp_1 - 2 \sum_{i=1}^3 p_i \, d p_i + 2 e A_0 ( \cos (c t) \, d p_2 - p_2 \sin (c t) \,c \, dt) \bigg)= 0 ,
%    \label{eq:tanN}
\end{equation}
where $i : M \to \N$ is the inclusion map.

We can see that 
\begin{equation*}
    \begin{split}
        \Psi : \mathbb R \times \N &\to J^{-1}_H (\mu) \\
        (\hat t,q^\alpha,p_i,t) &\to (c \hat t,q^\alpha,p_i,t+\hat t)
    \end{split}
\end{equation*}
is a global equivariant diffeomorphism  when on $J_H^{-1}(\mu)$ we  consider the action $\phi$ and on $\R\times \N$ the action
$\Phi:\R\times (\R\times \N)\to \R\times \N$ given by  $\Phi(s,(\hat t,q^\alpha,p_i,t))=(s+\hat t,q^\alpha,p_i,t).$ Therefore, $J_H^{-1}(\mu)/\R$ can be identified with the manifold $\N.$ 

Using the derivation of the equation (\ref{const}), we deduce that $\Psi^*(\omega_H)$ restrict to $\N$ and therefore (see (\ref{w2}))
$$(\omega_H)_\mu=\sum_{\alpha=2}^3 dq^\alpha\wedge dp_\alpha + \left(\frac{1}{m}\sum_{i=1}^3p_idp_i -\frac{eA_0}{m}cos(ct)dp_2 \right)\wedge dt.$$

Finally, the restriction of evolution vector field $E^{(\omega,\eta)}_H$ to $J_H^{-1}(\mu)$ can be seen on $\R\times \N$ as (see (\ref{E2}))
\begin{equation*}
    E^{(\omega,\eta)}_H=\frac{1}{m} \bigg( \sum_{\alpha=2}^3 p_\alpha \frac{\partial}{\partial q^\alpha} + \frac{p_1}{c}\frac{\partial}{\partial \hat t} \bigg) -\frac{eA_0}{m}\cos(ct)\frac{\partial }{\partial q_2} + \frac{eA_0}{m}p_2\sin(ct)\frac{\partial} {\partial p_1}+ (1-\frac{p_1}{mc})\frac{\partial }{\partial t}.
\end{equation*}
which projects on the reduced space $\N$ in the reduced vector field 
\begin{equation*}
    (E^{(\omega,\eta)}_H)_\mu=\frac{1}{m}  \sum_{\alpha=2}^3 p_\alpha \frac{\partial}{\partial q^\alpha} -\frac{eA_0}{m}\cos(ct)\frac{\partial }{\partial q_2} + \frac{eA_0}{m}p_2\sin(ct)\frac{\partial} {\partial p_1}+ (1-\frac{p_1}{mc})\frac{\partial }{\partial t}.
\end{equation*}

Therefore, the reduced mechanical presymplectic manifold is  
$(\N,(\omega_H)_\mu,(E^{(\omega,\eta)}_H)_\mu).$ 

\appendix
\section{Proof of Lemma \ref{Lema:4.5}}
We consider the lineal map
 $$\flat:V\to V^*, \;\;\; \flat(v)=i_v\omega.$$ 
 Then $\dim \ker\flat=1$ and $\dim \flat(V)=\dim V-\dim\ker \flat=2n.$ Moreover, since $R\in A^\perp$, we obtain that 
 \[
 \flat(A^\perp)=\{i_v\omega/\omega(v,a)=0,\forall a\in A\}=A^o\cap \flat(V)\mbox{ and } \ker(\flat_{|A^\perp})=(\ker\flat)\cap A^\perp=\langle R\rangle,
 \]
 with $A^o$ the annihilator of $A$, i.e.
 $$A^o=\{\alpha\in V^*/\alpha(v)=0, \forall a\in A\}.$$
 
Thus, we deduce that
\begin{eqnarray*}
    \dim A^\perp&=&\dim(A^\perp \cap \ker\flat)+\dim (A^o\cap \flat(V))=1-\dim (A^o+ \flat(V))+\dim A^o + \dim \flat(V)\\
    &=&1-\dim (A^o+ \flat(V))+(2n+1-\dim A )+2n\\ &=&4n+2-\dim (A^o+ \flat(V))-\dim A.
\end{eqnarray*}
If $R\notin A,$  then $A^o\not\subset \flat(V).$ In fact, if $A^o\subset \flat(V), $ then $(\flat(V))^o\subset A$. But, $R\in (\flat(V))^o$ and $R\notin A$. Thus, 
\[
\dim A^\perp=4n +2-2n-1-\dim A=2n+1-\dim A=\dim V -\dim A.
\]
On the other hand, if $R\in A$, then $(\flat(V))^o\subset A$. In fact, let $u\in (\flat(V))^o$. Then $u\in \ker \flat=\langle R\rangle$ and therefore, $u\in A.$ In consequence, in this case,  $A^o\subset \flat(V)$  and 
\[
\dim A^\perp=4n+2-2n-\dim A=\dim V- \dim A+1.
\]
Now, if $R\notin A$, we have that $A\oplus \langle R\rangle \subseteq (A^\perp)^\perp$. Indeed, if $u\in A$ then $\omega(u,v)=0$ for all $v\in A^\perp,$ and $R\in (A^\perp)^\perp$ On the other hand, using  (i) and (ii) in this lemma 
\[
\dim (A^\perp)^\perp=\dim V- \dim A^\perp+1=\dim A + 1.
\] 
Therefore, $(A^\perp)^\perp=A \oplus \langle R\rangle.$ 

Finally, if $R\in A$ then $A\subseteq (A^\perp)^\perp$ and $\dim (A^\perp)^\perp =\dim A$. This implies that  $(A^\perp)^\perp=A.$

\section*{Acknowledgements}

The authors thank J de Lucas and B Zawora for useful comments related with the contents of this paper.
I. Gutierrez-Sagredo has been supported by the grant PID2023-148373NB-I00 funded by MCIN /AEI /10.13039 /501100011033 / FEDER, UE, and by the Q-CAYLE Project funded by the Regional Government of Castilla y Le\'on (Junta de Castilla y Le\'on) and by the Ministry of Science and Innovation (MCIN) through the European Union funds NextGenerationEU (PRTR C17.I1).  D. Iglesias, J.C. Marrero and E. Padr\'on acknowledge financial support from the Spanish Ministry of Science and Innovation under grant PID2022-137909NB-C22. All the authors  has been partially supported by Agencia Estatal de Investigaci\'on (Spain) under grant RED2022-134301-TD.  I. Gutierrez-Sagredo thanks Universidad de La Laguna, where part of the work has been done, for the hospitality and support.

%%%%%%%%%%%%%%%%%%%%%%%%%%%%%%%%%%%%%%%%


\begin{thebibliography}{99}

\bibitem{AM}  R. Abraham, and J.E. Marsden, \textit{Foundations of Mechanics} (2nd. edition), Benjamin Cummings, Reading, Massachusetts, 1978.

\bibitem{Ac} B. Acakpo, Stable Hamiltonian structure and basic cohomology, \textit{ Annali di Matematica}, {\bf 201} (2022) 2465--2470. 

\bibitem{A} C. Albert,  Le th\'eor\`eme de r\'eduction de Marsden-Weinstein en g\'eom\'etrie cosymplectique et de contact, \textit{Journal of Geometry and Physics}, {\bf 6} (1989), no. 4, 627--649.

\bibitem{Ar} V. I. Arnold, \textit{Mathematical methods of classical mechanics,} Translated from the Russian by K. Vogtmann and A. Weinstein. Second edition. Graduate Texts in Mathematics, 60. Springer-Verlag, New York, 1989.

\bibitem{Bl} D. E. Blair, \textit{Riemannian Geometry of Contact and Symplectic Manifolds} (2nd edition), Progress in Mathematics 203, Birkh\"auser, 2010. 

\bibitem{CaLeLa} F. Cantrijn, M. de Le\'on,  and E.A. Lacomba: Gradients vector fields on cosymplectic manifolds. \textit{J. Phys. A,} {\bf 25, }  (1992), no. 1, 175--188. 

\bibitem{CNY} B. Cappelletti-Montano, A. de Nicola,  and  I. Yudin, A Survey on Cosymplectic Geometry,  \textit{
Reviews in Mathematical Physics, } {\bf 25} (2013), no. 10, 1343002. 

\bibitem{ChLeMa} D. Chinea, M. de Le\'on,  and J.C. Marrero, The constraint algorithm for time-dependent Lagrangians, \textit{J. Math. Phys. } {\bf 35}, (1994),no. 7, 3410--3447. 

\bibitem{LS} M. de Le\'on,  M. Saralegi, Cosymplectic reduction for singular momentum maps, \textit{J. Phys.  A} {\bf 26} (1993), no.19, 5033--5043. 


\bibitem{LeIz}M. de Le\'on,  and R. Izquierdo-L\'opez, A review on coisotropic reduction in symplectic, cosymplectic, contact and co-contact Hamiltonian systems, \textit{J. Phys. A: Math. Theor. } {\bf 57} (2024) 163001--163051. 

\bibitem{LMM} M. de Le\'on, J. Mar\'in-Solano, and J. C. Marrero, The constraint algorithm in the jet formalism, \textit{ Differential Geom. Appl. } {\bf 6} (1996), no. 3, 275--300.

\bibitem{MP} M. de Le\'on and P. R. Rodrigues:   \textit{ Methods of Differential Geometry in Analytical Mechanics},  North-Holland Mathematics Studies, 158, North-Holland, Amsterdam, 1989.

\bibitem{EMR} A. Echeverr\'ia Enr\'iquez, M.C. Mu\~{n}oz Lecanda, and N. Rom\'an-Roy, Non-standard connections in Classical Mechanics, \textit{J. Phys. A: Math. and Gen. } {\bf 28} (1995) 5553--5567.


\bibitem{EMR2} A. Echeverr\'ia Enr\'iquez, M.C. Mu\~{n}oz Lecanda, and N. Rom\'an-Roy, Reduction of Presymplectic Manifolds with Symmetry, \textit{Rev. Math. Phys. } {\bf 11}(10) (1999) 1209--1247.


\bibitem{GMP} V. Guillemin, E. Miranda, and  A. R. Pires,
Symplectic and Poisson geometry on b-manifolds,
\textit{Advances in Mathematics,} \textbf{264} (2014) 864-896.

\bibitem{HoZe} H. Hofer, and E. Zehnder, \textit{ Symplectic invariants and Hamiltonian dynamics}, Birkh\"auser advanced texts, Birkh\"auser Verlag, Basel, 1994. 

\bibitem{JoLu} B. Jovanovi\'c, and K. Luki\'c, Integrable systems in cosymplectic geometry,
\textit{J. Phys. A} {\bf 56} (2023), no. 1, Paper No. 015201, 18 pp.

\bibitem{LaMaPa} I. Lacirasella, J. C. Marrero, and E. Padr\'on, Reduction of symplectic principal  $\R$-bundles
\textit{J. Phys. A,} \textbf{45} (2012) no. 32 325202, 29 pp.

\bibitem{Lib} P. Libermann, Sur les automorphismes infinit\'esimaux des structures symplectiques et des structures de contact, \textit{Colloque Ge\'om. Diff. Globale} (Bruxelles, 1958), 7-59.

\bibitem{LM} P. Liberman, and Ch.-M. Marle, \textit{Symplectic Geometry and Analytical Mechanics}, Mathematics and Its Applications (MAIA, volume 35) 1987, Reidel Publishing Company.

\bibitem{LuMaZa} J. de Lucas, A. Maskalaniec, and B.M. Zawora,  Cosymplectic geometry, Reductions, and Energy-Momentum Methods with Applications. \textit{J. Nonlinear Math. Phys.} {\bf 31} (2024), no. 64. 

 \bibitem{MW} J.E. Marsden, A. Weinstein, Reduction of symplectic manifolds with symmetry, \textit{Rep. Math. Phys.} {\bf 5} (1974),
121-130.
 
\end{thebibliography}
\end{document}